\newcommand{\pl}[1]{\foreignlanguage{polish}{#1}}
\newcommand{\PP}{\mathbb{P}}
\newcommand{\RR}{\mathbb{R}}
\newcommand{\EE}{\mathbb{E}}
\newcommand{\NN}{\mathbb{N}}
\newcommand{\calF}{\mathcal{F}}
\newcommand{\calS}{\mathcal{S}}
\newcommand{\calG}{\mathcal{G}}
\newcommand{\calT}{\mathcal{T}}
\newcommand{\calI}{\mathcal{I}}
\newcommand{\calA}{\mathcal{A}}
\newcommand{\calB}{\mathcal{B}}
\newcommand{\ind}[1]{{\mathds{1}_{{#1}}}}
\newcommand{\at}{\mathrm{at}\,}
\newcommand{\supp}{\mathrm{supp}\,}
\newcommand{\beq}{\begin{equation}}
\newcommand{\eeq}{\end{equation}}
\newcommand{\vphi}{\varphi}
\theoremstyle{plain}
\newcounter{thm}
\newtheorem{main_theorem}[thm]{Theorem}
\newtheorem{theorem}{Theorem}[section]
\newtheorem{lemma}[theorem]{Lemma}
\newtheorem{conj}[theorem]{Conjecture}
\newtheorem{prop}[theorem]{Proposition}
\newtheorem{claim}[theorem]{Claim}
\theoremstyle{definition}
\newtheorem{remark}[theorem]{Remark}
\numberwithin{equation}{section}
\newcommand{\sprod}[2]{\langle {#1}, {#2} \rangle}
\author{Maciej Rzeszut}
\address{
	\pl{
	Maciej Rzeszut\\
	Instytut Matematyczny Polskiej Akademii Nauk\\
	ul. \'Sniadeckich 8\\
	00-696 Warszawa\\
	Poland
	}
	\&
	Department of Mathematics\\
	Weizmann Institute of Science\\
	P.O. Box 26\\
	Herzl St. 234\\
	7610000 Rehovot\\
	Israel
}
\email{maciej.rzeszut@gmail.com}
\author{Bartosz Trojan}
\address{
	\pl{
	Bartosz Trojan\\
	Instytut Matematyczny Polskiej Akademii Nauk\\
	ul. \'Sniadeckich 8\\
	00-696 Warszawa\\
	Poland}
}
\email{btrojan@impan.pl}
\thanks{The research was partialy supported by the National Science Centre, Poland, Grant 2016/23/B/ST1/01665}
\title{Concrete representation of atomic $(F_4)$ filtrations}
\begin{document}
\selectlanguage{english}

\begin{abstract} 
	We prove that for any martingale with respect to a biparameter atomic filtration satisfying $(F_4)$ 
	condition there is a martingale having the same joint distribution but with respect to the canonical $(F_4)$ 
	filtration. Even in one parameter case our result is an improvement of the theorem due to Montgomery-Smith, since
	the construction gives a morphism of filtrations and does not depend on underlying sequence.
\end{abstract}

\maketitle

\section{Introduction}
Let $(\Omega, \calF, \PP)$ be a probability space, i.e. $\Omega$ is a sample space with a $\sigma$-field $\calF$
and a probability measure $\PP$. A sequence of $\sigma$-fields $(\calF_i : i \in \NN_0)$ is called \emph{filtration} if
\[
	\calF_i \subset \calF_{i+1}, \qquad \text{for all}\quad i \in \NN_0.
\]
A model example of a filtration can be obtained by considering a product space
\[
	(S, \calS, \mu) = \bigotimes_{i = 0}^\infty (S_i, \calS_i, \mu_i)
\]
where each $(S_i, \calS_i, \mu_i)$ is a probability space. Then $\calF_i$ is set to be the $\sigma$-field generated by
the projection onto the first $i$ coordinates. The resulting sequence $(\calF_i : i \in \NN_0)$ will be called
the \emph{canonical} filtration on $(S, \calS, \mu)$.

Suppose that $(\calF_i : i \in \NN_0)$ is a filtration in a probability space $(\Omega, \calF, \PP)$.
A theorem due to Montgomery-Smith (see \cite[Theorem 3.1]{MS}) asserts that: For any sequence of random variables
$(f_n : 0 \leq n \leq N)$ on $(\Omega,\mathcal{F},\mathbb{P})$ which is a martingale with respect to
$(\mathcal{F}_n : 0 \leq n \leq N)$,
i.e. $\EE\left(f_m\mid\mathcal{F}_n\right)= f_n$ for $m\geq n$, there is a martingale sequence
$(\tilde{f}_n : 0 \leq n \leq N)$ of functions on $[0, 1]^N$ with respect to the canonical filtration and having the same
joint distribution as $(f_n : 0 \leq n \leq N)$. The construction is clever, but tailored to a given sequence
$(f_n : 0 \leq n \leq N)$. One of the goals of the present article is to remove this disadvantage, provided that $\Omega$
is discrete.

To achieve this we use the following observation: Suppose that there are two probability spaces $(S, \calS, \mu)$ and
$(T, \calT, \nu)$ equipped with families of $\sigma$-fields $(\calF_i : i \in \calI )$ and $(\calG_i : i \in \calI)$,
respectively. Assume that there is a mapping
\[
	\pi: (S, \calS, \mu) \rightarrow (T, \calT, \nu)
\]
so that
\begin{subequations}
\begin{equation}
	\label{eq:4}
	\pi^{-1}(U) \in \calF_i, \qquad\text{for all } U \in \calG_i \text{ and } i \in \calI,
\end{equation}
and
\begin{equation}
	\label{eq:5}
	\mu\big(\pi^{-1}(U) \big) = \nu(U), \qquad\text{for all } U \in \calT.
\end{equation}
\end{subequations}
Then $\pi$ induces a mapping
\footnote{
By $L^0(\Omega, \calF, \PP,X)$ we denote the space of equivalence classes of $\calF$-measurable functions with values in a measurable space $(X,\mathcal{X})$.}
\[
	\begin{aligned}
	\pi^*: L^0(T, \calT, \nu,X) &\longrightarrow L^0(S,\mathcal{S},\mu,X) \\
	f &\longmapsto f \circ \pi
	\end{aligned}
\]
that maps $\calF_n$-measurable functions to $\calG_n$-measurable functions preserving distributions, that is
\[
	\mu\big(\pi^*(f) \in B\big) = \nu\big(f \in B\big)
\]
for all $B\in \mathcal{X}$ and $f \in L^0(T,  \calG_n, \nu,X)$. The main theorem for one parameter case is the
following.
\begin{main_theorem}
	\label{main_thm:1}
	Let $(\calF_n : 1 \leq n \leq N)$ be a filtration in a discrete probability space $(\Omega, \calF, \PP)$.
	Then there is a sequence of probability spaces $\big((S_i, \calS_i, \mu_i) : i \in \NN_0 \big)$ 
	such that for any martingale sequence $(f_n : 1 \leq n \leq N)$ on $(\Omega, \calF, \PP)$ with respect
	to $(\calF_n : 1 \leq n \leq N)$ there is a martingale sequence $(\tilde{f}_n : 1 \leq n \leq N)$
	with respect to the canonical filtration of 
	\[
		(S, \calS, \mu) = \bigotimes_{i = 0}^\infty (S_i, \calS_i, \mu_i)
	\]
	having the same joint distribution as $(f_n : 1 \leq n \leq N)$.
\end{main_theorem}

A crucial step in the proof of Theorem \ref{main_thm:1} relies on our ability to equip the space of measurable functions
$\vphi:(S, \calS, \mu) \rightarrow (T, \calT, \nu)$ with a structure of a probability space such that the evaluation map
\[
	S\times T^S \ni (s,\vphi)\mapsto \vphi(s)\in T
\]
is measurable with respect to the product $\sigma$-field of $S\times T^S$ and enjoys certain quantitative properties.
This can be done in the case when $S$ and $T$ are discrete. Without this assumption, even the measurability of the
evaluation map is an obstacle (see \cite{Aum} for a detailed exposition of this problem). However, in many practical
applications (in particular, martingale inequalities) it is enough to approximate an arbitrary martingale with a
canonical one, thus reducing a given problem to a setting in which Theorem \ref{main_thm:1} applies. 
\begin{prop}
	\label{1parappr}
	Let $X$ be a separable Banach space. For any $X$-valued martingale $\left(f_n:0\leq n\leq N\right)$ on
	$\left(\Omega,\mathcal{F},\PP\right)$ with respect to a filtration $\left(\calF_n:0\leq n\leq N\right)$, and any
	$\varepsilon > 0$ there exists a filtration 
	$\left(\calG_n:0\leq n\leq N\right)$ such that $\calG_n$ is atomic, $\calG_n\subset \calF_n$, and 
	\[
		\left\|f_n-\EE\left(f_N\mid \calG_n\right)\right\|<\varepsilon
		\quad\text{for} \quad 0\leq n\leq N.
	\]
\end{prop}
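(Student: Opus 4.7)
The plan is to build $\calG_n$ as the $\sigma$-field generated by finitely-valued approximants to $f_0, \ldots, f_n$. First I would observe that any choice with $\calG_n \subset \calF_n$ will automatically satisfy
\[
	\EE(f_N \mid \calG_n) = \EE\bigl(\EE(f_N \mid \calF_n) \mid \calG_n\bigr) = \EE(f_n \mid \calG_n)
\]
by the tower property and the martingale hypothesis, so the target inequality reduces to controlling the quantity $\|f_n - \EE(f_n \mid \calG_n)\|$ for each fixed $n$.

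For the construction, I would fix $\delta = \varepsilon/2$ and, exploiting separability of $X$ together with the Bochner--Pettis measurability of each $f_n$, approximate $f_n$ in norm by a simple, $\calF_n$-measurable, finitely-valued function $g_n$ with $\|f_n - g_n\| < \delta$. Then I would set
\[
	\calG_n := \sigma(g_0, g_1, \ldots, g_n).
\]
Since each $g_k$ is $\calF_k$-measurable and takes only finitely many values, $\calG_n$ is a finite, hence atomic, sub-$\sigma$-field of $\calF_n$, and the family $(\calG_n)_{0 \leq n \leq N}$ is a filtration by design.

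To close the estimate, I would use that $g_n$ is $\calG_n$-measurable, so that $\EE(g_n \mid \calG_n) = g_n$. The triangle inequality combined with contractivity of conditional expectation then yields
\[
	\|f_n - \EE(f_n \mid \calG_n)\| \leq \|f_n - g_n\| + \|\EE(g_n - f_n \mid \calG_n)\| \leq 2\|f_n - g_n\| < \varepsilon.
\]
The only subtlety I foresee is pinning down the norm $\|\cdot\|$: as long as it is an $L^p$ norm with $1 \leq p < \infty$ (or, more generally, any norm under which conditional expectation is a contraction and simple functions are dense in $L^p(\Omega, \calF, \PP; X)$), both ingredients above are classical, and the construction goes through with no further work. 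Should one wish to treat an $L^\infty$-type norm, an additional truncation of $f_n$ outside a set of small measure would be required before producing the finitely-valued $g_n$.
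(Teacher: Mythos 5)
Your argument is correct and is essentially the paper's proof: reduce to $\EE\left(f_n\mid\calG_n\right)$ via the tower property, replace each $f_n$ by a discretely valued $\calF_n$-measurable approximant, let $\calG_n$ be generated by the approximants up to time $n$, and conclude by the triangle inequality together with contractivity of conditional expectation and $\calG_n$-measurability of the approximant. The only difference is that the paper quantizes through a single map $u:X\to X_0$ onto a countable subset with $\|x-u(x)\|<\varepsilon/2$ for every $x\in X$, so the approximants $u(f_n)$ are countably valued (hence $\calG_n$ is atomic, though not necessarily finite) and the error is controlled pointwise in $\omega$; this makes the estimate valid in any reasonable norm and avoids the $L^\infty$ caveat and extra truncation you mention, whereas your finitely valued simple approximants tie the argument to $L^p$ norms with $p<\infty$.
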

\begin{proof}
	Due to separability of $X$, we may choose a countable subset $X_0\subset X$ and a function $u:X\to X_0$ such that
	$\|x-u(x)\|<\frac{\varepsilon}{2}$ for any $x\in X$. Take
	$\calG_n=\sigma\left(u\left(f_0\right),\ldots,u\left(f_n\right)\right)$. Each of the functions
	$u\left(f_0\right),\ldots,u\left(f_n\right)$ attains countably many values and is $\calF_n$-measurable, so $\calG_n$
	is atomic and $\calG_n\subset \calF_n$. Therefore
	\begin{align*}
		\left\|f_n-\EE\left(f_N\mid \calG_n\right)\right\|
		&= \left\|f_n-\EE\left(f_n\mid \calG_n\right)\right\| \\
		&\leq \frac{\varepsilon}{2}+\frac{\varepsilon}{2}
		+ \left\|u\left(f_n\right)-\EE\left(u\left(f_n\right)\mid\calG_n\right)\right\|=\varepsilon.\qedhere
	\end{align*}
\end{proof}
The advantage of the construction we use in the proof of Theorem \ref{main_thm:1} is the ability to extend it to
biparameter case which is the main result of the present paper. Let us recall that a double-indexed sequence of
$\sigma$-fields $(\calF_{i, j} : i, j \in N_0)$ is a biparameter filtration if for all $i, j \in \NN_0$,
\[
	\calF_{i, j} \subset \calF_{i, j+1}, \qquad \calF_{i, j} \subset \calF_{i+1, j}.
\]
The following condition was introduced in \cite{CW}, 
\footnote{$i \wedge j = \min\{i, j\}$}
\begin{equation}
	\tag{$F_4$}
	\label{eq:3}
	\EE\big(\EE(f \mid \calF_{i, j}) \mid \calF_{i', j'}\big) =
	\EE\big(f \mid \calF_{i \wedge i', j \wedge j'}\big),
\end{equation}
(or, equivalently, $\calF_{i, j+1}$ and $\calF_{i+1, j}$ are conditionally independent given $\calF_{i, j}$) as a natural 
minimum requirement for developing a theory of biparameter martingales. It allows to maintain a relatively rich structure,
see e.g. the seminal papers \cite{CW, WoZa}, monographs \cite{weisz1, weisz2, Imk} and references therein. Questions about 
biparameter analogues of theorems concerning one-parameter martingales range from copying proofs verbatim to
open problems. \par 
	
Let us motivate the study of biparameter martingales. Multiparameter harmonic analysis has been a subject of research
for more than 50 years, see e.g. \cite{Feff,FeffPiph,FeffSt,Gundy,GunSt,NaRiSt,NaSt,NaWa,RiSt,StSt,St}. It has started
with $\RR^d$ equipped with product dilation system \cite{GunSt,Gundy}. Later it was extended to more complex cases
exploiting curvature and culminating in the most general result recently obtain in \cite{StSt, St}. In one parameter
case, when the dilation parameter is restricted to dyadic numbers, the underlying structure is approximately a
martingale setup, see \cite{Ch, HyKa}. Hence, there is a very close analogy between harmonic analysis and martingale
theory. For example, parallel to real $H^1$ spaces there are martingale $H^1$ spaces. There is another connection,
namely, regular filtrations naturally appear while doing harmonic analysis over $p$-adic fields. Even closer relation
is visible by introducing dyadic shift \cite{Peter}. Many results that are correct in the real case
are also true in the martingale case, very often with cleaner proofs. 

The situation significantly changes in the multiparameter setup. The product case corresponds to very special
filtrations satisfying $(F_4)$ condition. Some work has been done to develop theory parallel to the real harmonic
analysis but very little was obtained without $(F_4)$ condition, see \cite{b1, CW, Gundy, Imk, weisz1, weisz2, WoZa}.
Pursuing this direction, in \cite{StTr}, we studied the filtration that naturally appears on the boundary of
$\tilde{A}_2$ affine buildings which is often isomorphic to $p$-adic Heisenberg group, thus does not satisfy
$(F_4)$ condition. This shows that one can hope to get the martingale counterpart of \cite{RiSt} or
even \cite{StSt}. 

However, $(F_4)$ condition is significantly weaker than being a product filtration.
In this paper our aim is to better understand the structure of filtrations satisfying $(F_4)$ condition. 
To achieve our goal we constructed a morphism of biparameter filtrations that allows us to map any martingale
on the original space to the martingale on the model space with respect to the canonical biparameter filtration.

The simplest example of biparameter filtrations satisfying
\eqref{eq:3} is a tensor of product filtrations. Namely, let $\big((S_i, \calS_i, \mu_i) : i \in \NN_0\big)$ and
$\big((T_j, \calT_j, \nu_j) : j \in \NN_0\big)$, be two sequences of probability spaces. In the product space
\[
	(S, \calS, \mu) \otimes (T, \calT, \nu)
\]
where
\[
	(S, \calS, \mu) = \bigotimes_{i=0}^\infty (S_i,\calS_i,\mu_i),
	\qquad\text{and}\qquad
	(T, \calT, \nu) = \bigotimes_{j=0}^\infty (T_j,\calT_j,\nu_j),
\]
we define $\calF_{i, j}$ to be the $\sigma$-field generated by the projections
\[
	(s, t) \mapsto \Big(\big(s_{i'} : 0 \leq i' \leq i\big), \big(t_{j'} : 0 \leq j' \leq j\big)\Big)
\]
which is the product of canonical filtrations on $(S, \calS, \mu)$ and $(T, \calT, \nu)$.
However, this is \emph{not} a universal model, since it is characterized by the property that 
$\mathcal{F}_{i,j}=\mathcal{F}_{i,j}^-$ where
\begin{equation}
	\label{eq:1}
	\begin{aligned}
	\mathcal{F}_{i,j}^-
	&=\sigma\Big(A \cup B : A \in \calF_{i-1, j} \text{ and } B \in \calF_{i, j-1} \Big) \\
	&=\calF_{i-1, j} \vee \calF_{i, j-1}.
	\end{aligned}
\end{equation}
A more general one is constructed from a double-indexed sequence of probability spaces 
$\big((S_{i, j}, \calS_{i, j}, \mu_{i, j}) : i, j \in \NN_0 \big)$. Namely, in the tensor product
\begin{equation}
	\label{eq:2}
	\bigotimes_{i = 0}^\infty \bigotimes_{j= 0}^\infty \left(S_{i,j},\mathcal{S}_{i,j},\mu_{i,j}\right),
\end{equation}
we set $\calF_{i, j}$ to be the $\sigma$-field generated by the projection
\[
	s \mapsto \big(s_{i', j'} : 0 \leq i' \leq i, 0 \leq j' \leq j\big).
\]
Heuristically, the whole space is generated by the surplus of $\mathcal{F}_{i,j}$ over $\mathcal{F}_{i,j}^-$,
and this will be the main idea of the proof of the fact that for any probability space equipped with an \eqref{eq:3}
filtration one can find a map $\pi$ from a product as \eqref{eq:2} having the desired properties
\eqref{eq:4} and \eqref{eq:5}. 
\begin{main_theorem}
	\label{main_thm:2}
	Let $(\calF_{i, j} : 1 \leq i \leq N, 1 \leq j \leq M)$ be a biparameter $\eqref{eq:3}$-filtration in a discrete
	probability space $(\Omega, \calF, \PP)$. Then there is a double-indexed sequence of probability spaces
	$(S_{i,j}, \calS_{i,j}, \mu_{i,j})$ such that for any double-indexed martingale sequence 
	$(f_{i, j} : 1 \leq i \leq N, 1 \leq j \leq M)$ on $(\Omega, \calF, \PP)$ with respect to the filtration
	$(\calF_{i, j} : 1 \leq i \leq N, 1 \leq j \leq M)$ there is a martingale sequence
	$(\tilde{f}_{i, j} : 1 \leq i \leq N, 1 \leq j \leq M)$ with respect to the canonical filtration of
	\[
		(S, \calS, \mu) = \bigotimes_{i = 1}^N \bigotimes_{j=1}^M \big(S_{i,j},\mathcal{S}_{i,j},\mu_{i,j}\big),
	\]
	having the same distribution as $(f_{i, j} : 1 \leq i \leq N, 1 \leq j \leq M)$.
\end{main_theorem}
Our purpose for developing Theorem \ref{main_thm:2} was to gain a deeper understanding of biparameter decoupling analogous
to that presented in \cite{MS} in the one parameter case, ultimately leading to a proof of one side of the Davis
inequality for \eqref{eq:3} filtrations which is to appear in a forthcoming paper \cite{mrz}. As an application
we show that in the biparameter case for a discrete probability space the Hardy space $H^1_S$ is the interpolation 
sum of $H^1_\sigma$ and the space introduced by Garsia in \cite{Gar}
\[
	\calG_1 = \left\{f : \|f\|_{\calG_1} = \sum_{i, j = 1}^\infty \EE \big[|\Delta_{i, j} f|\big] < \infty \right\}.
\]
See Section \ref{sec:2} for details.\par 

It would be tempting to develop a biparameter counterpart of Proposition \ref{1parappr} in order to be able to apply
Theorem \ref{main_thm:2} outside of the discrete case. However, applying the same construction produces a double-indexed
filtration that fails the \eqref{eq:3}-condition in general. As far as we know, the following remains unsolved.
\begin{conj}
	Let $X$ be a separable Banach space. For any $X$-valued martingale $(f_{n,m}:0\leq n\leq N,0\leq m\leq M)$
	on $\left(\Omega,\mathcal{F},\PP\right)$ with respect to a filtration
	$(\calF_{n,m}:0\leq n\leq N,0\leq m\leq M)$ satisfying \eqref{eq:3}-condition, and any $\varepsilon > 0$
	there exists an atomic filtration $(\calG_{n,m}:0\leq n\leq N,0\leq m\leq M)$ satisfying \eqref{eq:3}-condition
	such that
	\[
		\left\|f_{n,m}-\EE\left(f_{N,M}\mid \calG_{n,m}\right)\right\|<\varepsilon
		\quad \text{for all} \quad 0\leq n\leq N,0\leq m\leq M.
 \]
\end{conj}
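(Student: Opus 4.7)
The natural starting point is to imitate Proposition~\ref{1parappr}: pick a countable $X_0 \subset X$ and $u:X \to X_0$ with $\|x-u(x)\|<\varepsilon/2$, and set $\calG_{n,m}$ to be the $\sigma$-field generated by the double-indexed family $\{u(f_{n',m'}):n' \leq n,\, m' \leq m\}$. The atomicity, the inclusion $\calG_{n,m}\subset \calF_{n,m}$, and the bound $\|f_{n,m}-\EE(f_{N,M}\mid\calG_{n,m})\|<\varepsilon$ transfer verbatim from the one-parameter argument. My plan is then to modify this construction so that the \eqref{eq:3}-condition is preserved, and I would attempt this by a double induction over $(n,m)$, using the $(F_4)$ structure of the ambient filtration to engineer $\calG_{n,m}$ compatibly with the already-constructed $\calG_{n-1,m}$ and $\calG_{n,m-1}$.

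At the inductive step, assume $\calG_{n',m'}$ have been defined for all $(n',m')\neq(n,m)$ with $n'\leq n$ and $m'\leq m$, so that the $(F_4)$-condition holds on the indices already visited and $\|f_{n',m'}-\EE(f_{N,M}\mid\calG_{n',m'})\|<\varepsilon$. I would look for an atomic $\calG_{n,m}\subset\calF_{n,m}$ containing $\calG_{n-1,m}\vee\calG_{n,m-1}$, making $u(f_{n,m})$ essentially measurable, and conditionally independent of the appropriate sister $\sigma$-field given the previous level. Since the ambient $\calF$ satisfies \eqref{eq:3}, inside every atom of $\calG_{n-1,m-1}$ the restrictions of $\calF_{n,m-1}$ and $\calF_{n-1,m}$ decompose as a conditional product. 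The idea is to leverage this atom-wise product decomposition to place an atomic $(F_4)$-compatible refinement that is fine enough to read off $u(f_{n,m})$ up to a controlled error, in a manner consistent with the grid already imposed by the two incoming refinements.

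The main obstacle, and precisely what I expect makes the conjecture hard, is the rigidity of atomic $(F_4)$ filtrations: within every atom of $\calG_{n,m}$ the refinements into $\calG_{n+1,m}$ and $\calG_{n,m+1}$ must partition it as an independent product relative to the underlying probability. A discretization chosen at a node $(n,m)$ therefore constrains every later node, and a local choice tailored to $u(f_{n,m})$ may force partitions at later nodes that no longer approximate the corresponding $u(f_{n',m'})$. This interaction is invisible in one parameter because there is no perpendicular direction. I expect that any successful argument must either quantitatively control these cumulative constraints—for instance by letting the countable sets $X_0$ shrink along a carefully nested hierarchy indexed by $(n,m)$, and paying a geometric price in $\varepsilon$—or bypass the explicit construction entirely, perhaps via a soft compactness or approximation argument in the space of $(F_4)$ filtrations. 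In either direction the quantitative control of the conditional-independence defect under discretization is, in my view, the crux.
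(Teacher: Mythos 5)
The statement you are trying to prove is stated in the paper as an open \emph{conjecture}: the authors explicitly remark that applying the one-parameter construction of Proposition \ref{1parappr} produces a double-indexed filtration that fails the \eqref{eq:3}-condition in general, and that, as far as they know, the problem remains unsolved. Your proposal does not close this gap. What you write is a plan: the first paragraph reproduces the (known to be insufficient) naive construction, and the second paragraph describes what an inductive step \emph{would have to accomplish} --- find an atomic $\calG_{n,m}\subset\calF_{n,m}$ containing $\calG_{n-1,m}\vee\calG_{n,m-1}$, rendering $u(f_{n,m})$ (approximately) measurable, and conditionally independent of the ``sister'' field given the previous level --- without ever constructing such a $\sigma$-field or proving it exists. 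Your third paragraph then concedes that this is exactly the hard point. A proof cannot consist of naming the required object and listing the obstructions to producing it.

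To be concrete about why the inductive step is not routine: the \eqref{eq:3}-condition for the ambient filtration gives conditional independence of $\calF_{n-1,m}$ and $\calF_{n,m-1}$ given $\calF_{n-1,m-1}$, but conditional independence does \emph{not} pass to sub-$\sigma$-fields when the conditioning field is simultaneously shrunk: you need $\calG_{n-1,m}$ and $\calG_{n,m-1}$ conditionally independent given $\calG_{n-1,m-1}$, and atoms of $\calG_{n-1,m-1}$ are unions of atoms of $\calF_{n-1,m-1}$, across which the conditional product structure of the ambient space mixes in an uncontrolled way. Moreover, any refinement you impose at $(n,m)$ to capture $u(f_{n,m})$ propagates rigid product constraints to all later indices (as you yourself observe), and you give no mechanism --- quantitative or soft --- showing these constraints can be met while keeping every approximation error below $\varepsilon$. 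Until such a mechanism is supplied, the argument is an (accurate) description of the difficulty rather than a proof, and the statement should still be regarded as open.
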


\subsection*{Notation}
Given an atomic $\sigma$-field $\calF$ by $\at \calF$ we denote the set of atoms of $\calF$. Let $\NN$ denote
the set of positive integers and $\NN_0 = \NN \cup \{0\}$.

\section{One parameter case}
In this section we want to prove Theorem \ref{main_thm:1}. To do so, we construct a sequence of discrete probability
spaces $\big((S, \calS_i, \mu_i) : i \in \NN_0\big)$ and a mapping
\[
	\pi: (\Omega, \calF, \PP) \rightarrow (S, \calS, \mu)
\]
satisfying \eqref{eq:4} and \eqref{eq:5}.

Suppose we are dealing with the simplest nontrivial case, that is $\big(\Omega, \mathcal{B},\mathbb{P}\big)$,
and $\mathcal{A}\subset\mathcal{B}$. Over each atom $A$ of $\mathcal{A}$, there is a different structure of $\mathcal{B}$,
that can be viewed as an individual probability space $(A,\mathcal{B} \cap A,\mathbb{P}_A)$ where
\[
	\PP_A(U) = \frac{\PP(A \cap U)}{\PP(A)},
\]
for any $U \in \calB \cap A$. An atom of $\mathcal{B}$
is in one-to-one correspondence with the choice of an atom of $\mathcal{A}$ and an element of
$(A,\mathcal{B}\cap A,\mathbb{P}_A)$. Moreover, an atom of $\mathcal{B}$ can be recovered if we redundantly include a
choice of an element of $(A',\mathcal{B}\cap A',\mathbb{P}_{A'})$ for all other atoms $A'$ of $\calA$. Next, we identify
an element of
\[
	\bigotimes_{A \in \at \mathcal{A}} (A,\mathcal{B}\cap A,\mathbb{P}_A)
\]
with a mapping $\varphi: \at \calA \rightarrow \at \calB$ such that for an atom $A \in \at \calA$, $\varphi(A)$ is
an atom of $\calB$. That being said, let us define
\begin{align*}
	\pi: (\Omega,\mathcal{A},\mathbb{P}) \otimes 
	\bigotimes_{A \in \at \mathcal{A}} 
	(A,\mathcal{B}\cap A,\mathbb{P}_A) 
	&\longrightarrow 
	(\Omega,\mathcal{B},\mathbb{P}) \\
	(A, \phi) &\longmapsto \varphi(A).
\end{align*}
Now, the condition \eqref{eq:4} is obvious. To check \eqref{eq:5} we consider each $A\in\at \mathcal{A}$ separately.
If $U=\bigcup_{k}B_k$, where $B_k\subset A$ are disjoint atoms of $\mathcal{B}$, then 
\[
	\pi^{-1}(U)= A \times \left\{\varphi:\varphi(A)\subset U\right\}.
\]
By definition of $\varphi$, the second factor is just a condition on the $A$-th coordinate of $\varphi$, and its measure
equals
\[
	\mathbb{P}_A(U)=\frac{\mathbb{P}(U)}{\mathbb{P}(A)}
\]
verifying \eqref{eq:5}.

In a general case, we have a filtration $(\calF_n : 1 \leq n \leq N)$ in a probability space $(\Omega, \calF, \PP)$.
We define a map
\begin{align*}
	\pi:\left(\Omega,\mathcal{F}_0,\mathbb{P} \right) \otimes \bigotimes_{n=1}^N \bigotimes_{A\in \at\mathcal{F}_{n-1}} 
	(A,\mathcal{F}_n\cap A,\mathbb{P}_A) 
	&\longrightarrow 
	\left(\Omega,\mathcal{F}_N,\mathbb{P}\right) \\
	(A, \varphi_1, \ldots, \varphi_N) 
	&\longmapsto
	\varphi_N \circ \varphi_{N-1} \circ \ldots \circ \varphi_1(A)
\end{align*}
where, as previously, an atom of $\bigotimes_{A\in \at\mathcal{F}_{n-1}}\left(A,\mathcal{F}_n\cap A,\mathbb{P}_A\right)$
we treat as a function $\varphi_n:\at\mathcal{F}_{n-1}\to\at\mathcal{F}_n$ satisfying
$\varphi_n(A_{n-1}) \subset A_{n-1}$. 
From definition of $\pi$ it is obvious that for an atom $B_n$ of $\mathcal{F}_n$, the condition
$\pi\left(A,\varphi_1,\ldots,\varphi_N\right) \in B_n$ is equivalent to
$\varphi_n \circ \varphi_{n-1} \circ \ldots \circ \varphi_1(A)=B_n$, so it
depends only on $A$ and $\varphi_i$ for $i \leq n$ proving \eqref{eq:4}. 
The condition \eqref{eq:5} can be check on atoms of $\mathcal{F}_N$. If $A_N\in \at\mathcal{F}_N$, then, denoting its
ancestors by $A_n\in \at\mathcal{F}_n$, we have
\[
	\pi \left(A,\varphi_1,\ldots,\varphi_N\right) =A_N 
\]
if and only if $A = A_0$, and
\[
	\varphi_n(A_{n-1}) = A_n, \qquad\text{for all } 1 \leq n \leq N.
\]
The probability of this event equals
\[
	\mathbb{P}(A_0) \prod_{n=1}^N \frac{\mathbb{P}(A_n)}{\mathbb{P}(A_{n-1})}= \mathbb{P}(A_N)
\]
which concludes the proof of Theorem \ref{main_thm:1}.

\section{Two parameter case}
In this section we prove Theorem \ref{main_thm:2}. In the two parameter case it is convenient to use the following variant
of mathematical induction.
\begin{lemma}
	\label{lem:1}
	Suppose that a set $X \subset \NN^2$ satisfies
	\begin{enumerate}
		\item $(1, 1) \in X$,
		\item if $(i, 1) \in X$, then $(i+1, 1) \in X$,
		\item if $(1, j) \in X$, then $(1, j+1) \in X$,
		\item if $(i+1, j), (i, j+1), (i, j) \in X$, then $(i+1, j+1) \in X$,
	\end{enumerate}
	then $X = \NN^2$.
\end{lemma}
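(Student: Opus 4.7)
The plan is to reduce the statement to a single induction on the sum $n = i + j$, after first handling the two coordinate axes separately via ordinary one-variable induction.

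First I would use conditions (1) and (2) together with ordinary induction on $i$ to conclude that $(i, 1) \in X$ for every $i \in \NN$. Symmetrically, (1) and (3) give $(1, j) \in X$ for every $j \in \NN$. This disposes of the axes and is the analogue of the base step in a two-dimensional induction.

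Next I would prove the statement ``$(i, j) \in X$ for all $(i, j) \in \NN^2$ with $i + j \leq n$'' by induction on $n \geq 2$. The base case $n = 2$ is just $(1, 1) \in X$, already covered by (1). For the inductive step, assume the assertion holds for $n$, and pick $(i, j)$ with $i + j = n + 1$. If $i = 1$ or $j = 1$, the point lies on an axis and belongs to $X$ by the previous paragraph. Otherwise $i, j \geq 2$, and the three points $(i - 1, j - 1)$, $(i, j - 1)$, $(i - 1, j)$ all have coordinate sum at most $n$, so by the inductive hypothesis they all lie in $X$. Applying condition (4) with the indices $i - 1$ and $j - 1$ in place of $i$ and $j$ then yields $(i, j) \in X$, completing the step.

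I do not anticipate any real obstacle: the only subtlety is choosing the induction parameter. A naive double induction on $i$ with an inner induction on $j$ would fail to deliver the ingredient $(i + 1, j) \in X$ needed in (4), because that point has larger first coordinate than the one being proved. Organizing the induction along the antidiagonals $i + j = \text{const}$ is precisely what makes the three hypotheses of (4) available simultaneously at each step.
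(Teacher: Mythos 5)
Your proof is correct: the paper states Lemma \ref{lem:1} without proof, and your induction along the antidiagonals $i+j=\mathrm{const}$ (after settling the two axes by ordinary induction from (1)--(3)) supplies a complete argument; in the step for $(i,j)$ with $i,j\geq 2$ the three points $(i-1,j-1)$, $(i-1,j)$, $(i,j-1)$ indeed all have coordinate sum at most $n$, so condition (4) applied with $i-1,j-1$ in place of $i,j$ gives $(i,j)\in X$. One peripheral remark is inaccurate, though it does not affect your proof: a row-by-row double induction does not in fact fail, since in proving $(i+1,j+1)$ the point $(i+1,j)$ is exactly the inner induction hypothesis along row $i+1$ (with base $(i+1,1)$ coming from (2)), while $(i,j)$ and $(i,j+1)$ come from the outer hypothesis; so that arrangement works just as well as the antidiagonal one.
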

Again, our aim is to construct a double-indexed sequence of probability spaces $(S_{i, j}, \calS_{i, j}, \mu_{i, j})$,
and a mapping
\[
	\pi: \bigotimes_{i = 1}^N \bigotimes_{j = 1}^M (S_{i, j}, \calS_{i, j}, \mu_{i, j})
	\rightarrow
	(\Omega, \calF_{N, M}, \PP)
\]
satisfying \eqref{eq:4} and \eqref{eq:5}. We use similar idea as in the one parameter case. For $1 \leq i \leq N$ and
$1 \leq j \leq M$, we set 
\[
	\big(S_{i, j}, \calS_{i, j}, \mu_{i, j}\big) = \bigotimes_{A \in \at \calF_{i, j}^-} (A, \calF_{i, j} \cap A, \PP_A)
\]
where
\[
	\calF_{i, j}^- = \calF_{i-1, j} \vee \calF_{i, j-1}
\]
with
\begin{equation}
	\label{eq:33}
	\mathcal{F}_{i,0} = \{\emptyset,\Omega\},
	\quad\text{and}\quad
	\mathcal{F}_{0, j} = \{\emptyset,\Omega\}.
\end{equation}
Hence, the atoms in $\calS_{i, j}$ are sequences having a form $(B_A : A \in \at\mathcal{F}_{i,j}^-)$ 
where $B_A$ denotes an atom of $\calF_{i, j}$ contained in $A$. Observe that
for such an atom we have
\[
	\mu_{i, j}\Big( B_A : A \in \at \calF_{i, j}^{-} \Big) = \prod_{A \in \at \calF_{i, j}^-} \PP_A(B_A).
\]
Consequently, atoms of the domain of $\pi$ are
\begin{equation}
	\label{eq:6}
	\mathbf{B}
	=\Big( \big( B^{i,j}_A : A\in \at\mathcal{F}_{i,j}^- \big) : 1 \leq i \leq N, 1 \leq j \leq M \Big).
\end{equation}
We are now ready to define the mapping $\pi$, namely for an atom of the form \eqref{eq:6} we set
\begin{equation}
	\label{eq:10}
	\pi(\mathbf{B})= \bigcap_{i = 1}^N \bigcap_{j = 1}^M \bigcup_{A\in\at\mathcal{F}_{i,j}^-} B^{i,j}_A.
\end{equation}
Observe that the right hand-side of \eqref{eq:10} can be written as a disjoint union of sets of the form
\begin{equation}
	\label{eq:16}
	\bigcap_{i = 1}^N \bigcap_{j = 1}^M B^{i,j}_{A_{i,j}}
\end{equation}
while $A_{i,j}$ runs over all atoms of $\mathcal{F}_{i,j}^-$. Suppose that there is a sequence
$(A_{i, j} : 1 \leq i \leq N, 1 \leq j \leq M)$ so that $A_{i, j} \in \at \calF^-_{i, j}$ and
\[
	\bigcap_{i = 1}^N \bigcap_{j = 1}^M B^{i,j}_{A_{i,j}} \neq \emptyset.
\]
We are going to use the induction procedure given by Lemma \ref{lem:1}. 
For $2 \leq i \leq N$ and $2 \leq j \leq M$, we have
\[
	B^{i-1, j}_{A_{i-1,j}} \cap B^{i, j-1}_{A_{i, j-1}} \cap B^{i, j}_{A_{i, j}} \neq \emptyset.
\]
Since
\[
	B^{i-1, j}_{A_{i-1, j}} \cap B^{i, j-1}_{A_{i, j-1}} \in \calF^-_{i, j} \subset \calF_{i, j}.
\]
we conclude that 
\begin{equation}
	\label{eq:8}
	B^{i-1, j}_{A_{i-1, j}} \cap B^{i, j-1}_{A_{i, j-1}} \supset B^{i, j}_{A_{i, j}}.
\end{equation}
Moreover, $B^{i-1, j}_{A_{i-1, j}} \cap B^{i, j-1}_{A_{i, j-1}}$ is an atom of $\calF_{i, j}^-$, thus
\begin{equation}
	\label{eq:12}
	A_{i, j} = B^{i-1, j}_{A_{i-1, j}} \cap B^{i, j-1}_{A_{i, j-1}}.
\end{equation}
Similarly, for $2 \leq i \leq N$,
\[
	B^{i-1, 1}_{A_{i-1, 1}} \cap B^{i, 1}_{A_{i, 1}} \neq \emptyset
\]
and since $B^{i-1, 1}_{A_{i-1, 1}}$ and $B^{i, 1}_{A_{i, 1}}$ are atoms of $\calF_{i-1, 1}$ and $\calF_{i, 1}$,
respectively, we obtain that
\begin{equation}
	\label{eq:17}
	B^{i-1, 1}_{A_{i-1, 1}} \supset B^{i, 1}_{A_{i, 1}}.
\end{equation}
Because
\[
	\calF_{i-1, 1} = \calF_{i-1, 1} \vee \calF_{i, 0} = \calF^-_{i, 1},
\]
$B^{i-1, 1}_{A_{i-1, 1}}$ is an atom of $\calF^-_{i ,1}$, thus
\begin{equation}
	\label{eq:19}
	A_{i, 1} = B^{i-1, 1}_{A_{i-1,1}}.
\end{equation}
Analogously, for $2 \leq j \leq N$, we conclude
\begin{equation}
	\label{eq:21}
	B^{1, j-1}_{A_{1, j-1}} \supset B^{1, j}_{A_{1, j}},
\end{equation}
and
\begin{equation}
	\label{eq:18}
	A_{1, j} = B^{1, j-1}_{A_{1, j-1}}.
\end{equation}
Finally, $B^{1, 1}_{A_{1, 1}}$ is any atom of $\calF_{1, 1}$, thus
\begin{equation}
	\label{eq:20}
	A_{1, 1} = \Omega.
\end{equation}
Using \eqref{eq:8}, \eqref{eq:17} and \eqref{eq:18}, by Lemma \ref{lem:1}, for $1 \leq n \leq N$ and $1 \leq m \leq M$
we obtain that
\[
	\bigcap_{i = 1}^n \bigcap_{j = 1}^m B^{i, j}_{A_{i, j}} = B^{n, m}_{A_{n, m}}.
\]
In particular,
\begin{equation}
	\label{eq:22}
	\pi(\mathbf{B}) = \bigcap_{i = 1}^N \bigcap_{j = 1}^M B^{i, j}_{A_{i, j}} = B^{N, M}_{A_{N, M}},
\end{equation}
provided that $\pi(\mathbf{B}) \neq \emptyset$.

We are now in the position to verify \eqref{eq:4}. If $U$ is an atom of $\calF_{n, m}$, then
$\mathbf{B} \subset \pi^{-1}(U)$ is equivalent to $B^{n, m}_{A_{n, m}} = U$. In view of
\eqref{eq:12}, \eqref{eq:19}, \eqref{eq:18} and \eqref{eq:20}, $B^{n, m}_{A_{n, m}}$ depends only on $A_{i, j}$
for $1 \leq i \leq n$ and $1 \leq j \leq m$, thus $\pi^{-1}(U)$ belongs to $\calS_{n, m}$. To show
\eqref{eq:5}, it is sufficient to consider $U$ being an atom of $\calF_{N, M}$. Suppose that $\mathbf{B} \subset 
\pi^{-1}(U)$. By \eqref{eq:22}, for each $1 \leq i \leq N$ and $1 \leq j \leq M$, $B^{i, j}_{A_{i, j}}$ is
the unique atom of $\calF_{i, j}$ containing $U$. Therefore,
\[
	\mu\big(\pi^{-1}(U) \big) = 
	\mu\big(\mathbf{B}\big) =
	\prod_{i = 1}^N \prod_{j = 1}^M \PP_{A_{i, j}}\big(B^{i, j}_{A_{i, j}}\big).
\]
Now it is enough to show that for each $1 \leq n \leq N$ and $1 \leq m \leq M$,
\begin{equation}
	\label{eq:23}
	\prod_{j = 1}^n \prod_{j = 1}^m \PP_{A_{i, j}}\big(B^{i, j}_{A_{i, j}}\big) 
	= \PP\big(B^{n, m}_{A_{n, m}} \big).
\end{equation}
For the proof we use the induction procedure given by Lemma \ref{lem:1}. For $n = m = 1$, there is nothing to be proved
since $A_{1, 1} = \Omega$. For $n > 1$ and $m = 1$, by \eqref{eq:19}, we have
\begin{align*}
	\prod_{i = 1}^n \PP_{A_{i, 1}}\big( B^{i, 1}_{A_{i, 1}} \big)
	&=
	\prod_{i = 1}^n \frac{\PP\big(B^{i, 1}_{A_{i, 1}}\big)}{\PP(A_{i, 1})} \\
	&=
	\PP\big(B^{1, 1}_{A_{1, 1}}\big) 
	\prod_{i = 2}^n \frac{\PP\big(B^{i, 1}_{A_{i, 1}}\big)}{\PP\big(B^{i-1, 1}_{A_{i-1, 1}}\big)}
	=
	\PP\big(B_{A_{i, 1}}^{i, 1}\big).
\end{align*}
For $n = 1$ and $m > 1$ the reasoning is analogous. Now, let us suppose that \eqref{eq:23} holds true for $(n-1, m-1)$,
$(n-1, m)$ and $(n, m-1)$ for some $2 \leq n \leq N$ and $2 \leq m \leq M$. Then
\begin{align}
	\nonumber
	\prod_{i = 1}^n \prod_{j = 1}^m \PP_{A_{i, j}} \big(B^{i, j}_{A_{i, j}}\big) 
	&=
	\bigg(\prod_{i = 1}^{n-1} \prod_{j = 1}^m \PP_{A_{i, j}} \big(B^{i, j}_{A_{i, j}}\big)\bigg)
	\bigg(\prod_{i = 1}^n \prod_{j = 1}^{m-1} \PP_{A_{i, j}} \big(B^{i, j}_{A_{i, j}}\big)\bigg) \\
	\nonumber
	&\phantom{=}\times
	\bigg(\prod_{i = 1}^{n-1} \prod_{j = 1}^{m-1} \PP_{A_{i, j}} \big(B^{i, j}_{A_{i, j}}\big)\bigg)^{-1} 
	\PP_{A_{n, m}}\big(B^{n, m}_{A_{n, m}}\big)\\
	\label{eq:24}
	&=
	\frac{\PP\big(B^{n-1, m}_{A_{n-1, m}}\big) \PP\big(B^{n, m-1}_{A_{n, m-1}}\big)}
	{\PP\big(B^{n-1, m-1}_{A_{n-1, m-1}}\big) \PP(A_{n, m})}
	\PP\big(B^{n, m}_{A_{n, m}} \big).
\end{align}
Observe that by the conditional independence
\begin{align*}
	\frac{\PP\big(B^{n-1, m}_{A_{n-1, m}}\big) \PP\big(B^{n, m-1}_{A_{n, m-1}}\big)}
	{\PP\big(B^{n-1, m-1}_{A_{n-1, m-1}}\big) \PP(A_{n, m})}
	&=
	\PP\big(B^{n-1, m}_{A_{n-1, m}} | B^{n-1, m-1}_{A_{n-1, m-1}} \big) 
	\PP\big(B^{n, m-1}_{A_{n, m-1}} | B^{n-1, m-1}_{A_{n-1, m-1}} \big)
	\frac{\PP\big(B^{n-1, m-1}_{A_{n-1, m-1}}\big)}{\PP(A_{n, m})} \\
	&=
	\PP\big(B^{n-1, m}_{A_{n-1, m}} \cap B^{n, m-1}_{A_{n, m-1}} | B^{n-1, m-1}_{A_{n-1, m-1}} \big)
	\frac{\PP\big(B^{n-1, m-1}_{A_{n-1, m-1}}\big)}{\PP(A_{n, m})} \\
	&=
	\frac{\PP\big(B^{n-1, m}_{A_{n-1, m}} \cap B^{n, m-1}_{A_{n, m-1}}\big)}
	{\PP(A_{n, m})} = 1,
\end{align*}
where the last equality is a consequence of \eqref{eq:12}. Therefore, by \eqref{eq:24}, we conclude that
\eqref{eq:23} holds true proving Theorem \ref{main_thm:2}.

\section{Applications to decoupling in martingale $H^1$ spaces}
\label{sec:2}
In this section we demonstrate that canonical product filtrations -- both in one and two parameters -- provide a natural
framework for constructing and dealing with \emph{decoupled sequences}. For a thorough exposition on this concept, we
refer the reader to \cite{GdlP}. We start by recalling the definition of a decoupled tangent sequence.

Let $(\Omega, \calF, \PP)$ be a probability space with a filtration $(\calF_k : 1 \leq k \leq n)$, 
$(\tilde{\Omega}, \tilde{\calF})$ be a measurable space with a filtration $(\tilde{\calF_k} : 1 \leq k \leq n)$, and 
$\tilde{\PP}: \Omega \times \tilde{\calF} \rightarrow [0, 1]$ be a probability transition function. We construct a new
probability space $(\Omega \times \tilde{\Omega}, \calF \otimes \tilde{\calF}, \PP \otimes \tilde{\PP})$, where
\[
	\PP \otimes \tilde{\PP}(A \times B) = \int_A \tilde{\PP}(\omega, B) \ \PP({\rm d} \omega)
\]
for $A \in \calF$ and $B \in \tilde{\calF}$. Given a sequence $(f_k : 1 \leq k \leq n)$ adapted to
$(\calF_k : 1 \leq k \leq n)$, a sequence $(g_k : 1 \leq k \leq n)$ adapted to $(\calF_k \otimes \tilde{\calF_k} :
1 \leq k \leq n)$ is called \emph{decoupled tangent sequence} to $(f_k)$, if for each $\omega \in \Omega$,
$(g_k(\omega, \cdot) : 1 \leq k \leq n)$ is a sequence of independent random variables on $(\tilde{\Omega}, \tilde{\calF},
\tilde{\PP}(\omega, \cdot))$, and
\begin{equation}
	\label{eq:30}
	\EE\big[f_k \otimes \ind{\Omega'} \big| \calF_{k-1} \otimes \tilde{\calF}_{k-1} \big]
	=
	\EE\big[g_k \big| \calF_{k-1} \otimes \tilde{\calF}_{k-1}\big]
\end{equation}
for all $k = 1, \ldots, n$.

The above definition, as well as the proof of existence, may seem to be quite abstract, however, there is an explicit
construction in the case of the canonical filtration on a product space (see \cite[Example 4.3.3]{KW})
\[
	(S, \calS, \mu) = \bigotimes_{j = 1}^n (S_j, \calS_j, \mu_j)
\]
To do so, we take $\tilde{\Omega} = S$, $\tilde{\calF}=\calF$, $\tilde{\calF}_k = \calF_k$, and
$\tilde{\PP}(\omega, B) = \PP(B)$, and define
\begin{equation}
	\label{eq:25}
	g_k(\omega_1, \ldots, \omega_k, \tilde{\omega}_1, \ldots, \tilde{\omega}_k) 
	= 
	f_k(\omega_1, \ldots, \omega_{k-1}, \tilde{\omega}_k).
\end{equation}
In view of a theorem due to Kwapień and Woyczynski \cite[Theorem 5.2.1]{KW}, if $f_k$ and $g_k$ are positive functions
satisfying \eqref{eq:25} then for any increasing concave function $\phi:\mathbb{R}_+\to\mathbb{R}_+$,
\begin{equation}
	\label{eq:26}
	\mathbb{E} \phi\left(f_1+\ldots+f_k\right)\simeq \mathbb{E} \phi\left(g_1+\ldots+g_k\right)
\end{equation}
where the implicit constants in \eqref{eq:26} are absolute. This observation is particularly useful when dealing with
martingale Hardy spaces defined in terms of square functions. To be more precise, let $(\Omega, \calF, \PP)$ be a
probability space with a filtration $(\calF_k : k \in \NN_0)$, $\calF_0 = \{\emptyset, \Omega\}$. For a martingale
$f = (f_k : k \in \NN)$, the square function $S(f)$ and the conditional square function $s(f)$ are defined as
\[
	S(f) = \Big(\sum_{k = 1}^\infty \big|\Delta_k(f) \big|^2\Big)^{\frac{1}{2}}
	\qquad\text{and}\qquad
	s(f) = \Big(\sum_{k = 1}^\infty \EE\big( | \Delta_k(f) |^2 \big| \calF_{k-1}\big) \Big)^{\frac{1}{2}}
\]
where
\[
	\Delta_k(f) = 
	\begin{cases}
		f_k - f_{k-1} & \text{if } k \geq 2, \\
		f_1 &\text{otherwise.}
	\end{cases}
\]
Then the Hardy spaces $H^1_S$ and $H^1_s$ consist of martingales $f$ so that
\[
	\|f\|_{H^1_S} = \EE\big[ S(f) \big] < \infty, \qquad\text{and}\qquad \|f\|_{H^1_s} = \EE\big[ s(f) \big] < \infty,
\]
respectively. In fact, there is $C > 0$ such that for all martingales $f$,
\begin{equation}
	\label{eq:11}
	\|S(f) \|_{L^1} \leq C \|s(f)\|_{L^1}.
\end{equation}
In the case of product filtrations, the proof of \eqref{eq:11} is straightforward. Indeed, let
$(\vphi_k : 1 \leq k \leq n)$ be a sequence on $S$ adapted to the canonical filtration
$(\calF_k : 1 \leq k \leq n)$. Then, by \eqref{eq:30} and \eqref{eq:26}, we obtain
\begin{align*}
	\int_S
	\left(\sum_{k = 1}^n \varphi_k\left(x_1,\ldots,x_k\right)\right)^\frac12 \mu({\rm d} x)
	&\leq
	C\int_{S\otimes S}
	\left(\sum_{k=1}^n \varphi_k\left(x_1,\ldots,x_{k-1},y_k\right)\right)^\frac12 \mu\otimes\mu({\rm d}(x, y))\\
	&\leq 
	\int_S \left(\int_S \sum_{k=1}^n \varphi_k\left(x_1,\ldots,x_{k-1},y_k\right) \mu({\rm d} y) 
	\right)^\frac12 \mu({\rm d} x)\\
	&= 
	\int_S
	\left(\sum_{k=1}^n \int_{S_k} \varphi_k\left(x_1,\ldots,x_{k-1},t\right) \mu_k({\rm d } t) \right)^\frac12 
	\mu({\rm d} x).
\end{align*}
Now, to conclude \eqref{eq:11}, it is enough to take $\vphi_k = |\Delta_k f|^2$.

Now, let us turn to biparameter case. For a martingale $f = (f_{i, j} : i, j \in \NN)$, the martingale differences
are defined as
\[
	\Delta_{i, j}(f) = 
	\begin{cases}
		f_{i, j} - f_{i-1, j} - f_{i, j -1} + f_{i-1, j-1} & \text{if } i \geq 2, j \geq 2, \\
		f_{1, j} - f_{1, j-1} &\text{if } i = 1, j \geq 2, \\
		f_{i, 1} - f_{i-1, 1} &\text{if } i \geq 2, j = 1, \\
		f_{1, 1} & \text{if } i = j = 1.
	\end{cases}
\]
In this context, there are three square functions
\[
	S(f) = \Big(\sum_{i = 1}^\infty \sum_{j = 1}^\infty |\Delta_{i, j} (f)|^2\Big)^{\frac{1}{2}},
	\qquad
	s(f) = \Big(
	\sum_{i = 1}^\infty \sum_{j = 2}^\infty \EE\big[ |\Delta_{i, j} (f)|^2 \big|\calF_{i-1, j-1} \big]
	\Big)^\frac{1}{2},
\]
and
\[
	\sigma(f) = 
	\Big(
	\sum_{i = 1}^\infty \sum_{j = 2}^\infty \EE\big[ |\Delta_{i, j} (f)|^2 \big|\calF^-_{i, j} \big]
	\Big)^\frac{1}{2}
\]
with the convention that $\calF_{i, 0} = \calF_{0, j} = \{\emptyset, \Omega\}$. 
Since $\Delta_{i, j} = \Delta^1_i \Delta^2_j$ where $\Delta^1_i$ and $\Delta^2_j$ are the martingale differences
with respect to
\[
	\calF_{i, \infty} = \sigma\Big(\bigcup_{j = 1}^\infty \calF_{i, j}\Big),
	\qquad\text{and}\qquad
	\calF_{\infty, j} = \sigma\Big(\bigcup_{i = 1}^\infty \calF_{i, j}\Big),
\]
respectively, the estimate \eqref{eq:11} is a simple iteration of one-parameter argument. In fact, we have
the following theorem.
\begin{theorem}
	\label{thm:1}
	Let $(\Omega, \calF, \PP)$ be a discrete probability space with biparameter \eqref{eq:3}-filtration. There
	is a constant $C > 0$ such that for all martingales $f$,
	\begin{equation}
		\label{eq:31}
		\|S(f)\|_{L^1} \leq C \|\sigma(f)\|_{L^1}.
	\end{equation}
\end{theorem}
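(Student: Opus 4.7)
The plan is to combine Theorem \ref{main_thm:2} with a biparameter version of the one-parameter decoupling argument that appears in the derivation of \eqref{eq:11}. Both sides of \eqref{eq:31} depend on $f$ only through its joint distribution, so Theorem \ref{main_thm:2} lets us assume, without loss of generality, that
\[
	(\Omega, \calF, \PP) = \bigotimes_{i = 1}^N \bigotimes_{j = 1}^M (S_{i, j}, \calS_{i, j}, \mu_{i, j})
\]
and that $(\calF_{i, j})$ is the canonical $(F_4)$-filtration. Under this reduction, $\calF_{i, j}$ is generated by the projections onto the coordinates labelled by $[1, i] \times [1, j]$, while $\calF_{i, j}^- = \calF_{i-1, j} \vee \calF_{i, j-1}$ is generated by the same coordinates with the single exception of $(i, j)$.

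Next I would enumerate the indices $(i, j) \in [1, N] \times [1, M]$ lexicographically. Since $(i', j') \in [1, i] \times [1, j]$ implies that $(i', j')$ precedes or equals $(i, j)$ in this ordering, the positive functions $\varphi_{i, j} = |\Delta_{i, j}(f)|^2$ form a sequence adapted to the one-parameter canonical filtration on $S$ associated with the lexicographic enumeration, and in this enumeration $(i, j)$ is both the last coordinate on which $\varphi_{i, j}$ depends and the unique coordinate in $[1, i] \times [1, j]$ that is absent from $\calF_{i, j}^-$.

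Now introduce an independent copy $y \in S$ of $x \in S$ and define $\psi_{i, j}(x, y)$ to be $\varphi_{i, j}$ with $x_{(i, j)}$ replaced by $y_{(i, j)}$; this is the decoupled tangent sequence prescribed by \eqref{eq:25} along the lexicographic order. Applying the Kwapień--Woyczynski inequality \eqref{eq:26} with the concave function $\phi(t) = \sqrt{t}$, then Jensen's inequality to pull $\int_S \cdot \, \mu({\rm d} y)$ inside the square root, yields
\begin{align*}
	\|S(f)\|_{L^1}
	&= \int_S \Big(\sum_{i, j} \varphi_{i, j}(x) \Big)^{\frac{1}{2}} \mu({\rm d} x) \\
	&\leq C \int_{S \otimes S} \Big(\sum_{i, j} \psi_{i, j}(x, y) \Big)^{\frac{1}{2}} \mu\otimes\mu({\rm d}(x, y)) \\
	&\leq C \int_S \Big(\sum_{i, j} \int_S \psi_{i, j}(x, y) \mu({\rm d} y) \Big)^{\frac{1}{2}} \mu({\rm d} x).
\end{align*}
Because $\psi_{i, j}$ depends on $y$ only through $y_{(i, j)}$, the inner integral is precisely $\EE\big[|\Delta_{i, j}(f)|^2 \big| \calF_{i, j}^- \big]$, so the last line equals $C \|\sigma(f)\|_{L^1}$.

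The technical point requiring care is the identification of the inner integral: integrating out the single coordinate $x_{(i, j)}$ in the canonical product filtration produces the conditional expectation given $\calF_{i, j}^-$, not given $\calF_{i-1, j-1}$, which is precisely what makes the estimate come out in terms of $\sigma(f)$ rather than $s(f)$. Once this observation is in place, the proof is a direct two-parameter transcription of the one-parameter derivation of \eqref{eq:11}; the lexicographic linearization is the device that reduces the biparameter decoupling to the available one-parameter result \cite{KW}.
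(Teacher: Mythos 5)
Your argument is correct and follows the same overall strategy as the paper---reduce to the canonical product filtration via Theorem \ref{main_thm:2}, decouple via the Kwapie\'n--Woyczy\'nski inequality \eqref{eq:26}, then use Jensen's inequality and identify integration over the single coordinate $(i,j)$ as $\EE\big[|\Delta_{i,j}(f)|^2\mid\calF^-_{i,j}\big]$---but you implement the key decoupling step differently. The paper iterates the one-parameter inequality twice, first along rows and then along columns, passing through a four-fold product $\mu^{\otimes 4}$, and then asserts (``similarly'') the comparison with the two-copy expression in which only the corner coordinate $(i,j)$ is replaced. Your lexicographic linearization reaches that two-copy expression in a single application of \eqref{eq:25}--\eqref{eq:26}: since $(i,j)$ is the lexicographically largest element of $[1,i]\times[1,j]$, the summands $|\Delta_{i,j}(f)|^2$ are adapted to the lexicographic canonical filtration and their canonical tangent sequence replaces exactly the coordinate $(i,j)$. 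This is arguably cleaner, as it uses the cited one-parameter theorem verbatim and bypasses the paper's least transparent step. One caveat: your opening claim that both sides of \eqref{eq:31} depend on $f$ only through its joint distribution is not literally true for $\|\sigma(f)\|_{L^1}$, which involves the $\sigma$-fields $\calF^-_{i,j}$ and hence the filtration, not just the law of the process; the reduction is nevertheless legitimate because the morphism $\pi$ constructed for Theorem \ref{main_thm:2} intertwines $\EE[\,\cdot\mid\calF^-_{i,j}]$ on $\Omega$ with the conditional expectation onto the model $\sigma$-field generated by the coordinates in $[1,i]\times[1,j]\setminus\{(i,j)\}$, so that $\sigma(\tilde f)=\sigma(f)\circ\pi$---a point the paper's own proof also leaves implicit.
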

\begin{proof}
	To prove \eqref{eq:31}, it is sufficient to consider a function $f \in L^0(\calF_{N, N})$ for some $N \in \NN$. 
	Now, in view of Theorem \ref{main_thm:2}, we may assume that we are dealing with a product space
	\[
		(S, \calS, \mu) = \bigotimes_{i = 1}^N \bigotimes_{j = 1}^N (S_{i,j}, \calS_{i, j}, \mu_{i, j})
	\]
	with the canonical biparameter filtration. For $x \in S$ and $A \subseteq [1, N]^2$, we set
	\[
		x_A = \big(x_{i, j} : (i, j) \in A \big).
	\]
	If $x$ and $y$ are two sequences indexed by $A$ and $B$, $A \cap B = \emptyset$, then $(x, y)$ is a sequence indexed
	by $A \cup B$. The range of the operator $\Delta_{i, j}$ is the set of $\calF_{i, j}$-measurable
	functions that are orthogonal to all functions that are $\calF_{i, j-1}$ or $\calF_{i-1, j}$ measurable. Therefore,
	$\Delta_{i, j} (f)$ is a function that depends only on $x_{[1, i] \times [1, j]}$, and
	\[
		\int_{\bigotimes_{k = 1}^i S_{k, j}} \Delta_{i, j}(f) 
		\left(x_{[1, i] \times [1, j-1]}, s_{[1, i] \times \{j\}}\right) 
		\mu_{1, j} \otimes \ldots \otimes \mu_{i, j} ({\rm d} s) = 0,
	\]
	and
	\[
		\int_{\bigotimes_{k = 1}^j S_{i, k}} \Delta_{i, j}(f)
		\left(x_{[1, i-1] \times [1, j]}, t_{\{i\} \times [1, j]}\right)
		\mu_{i, 1} \otimes \ldots \otimes \mu_{i, j} ({\rm d} t) = 0.
	\]
	for all $x \in S$. Moreover, we have
	\[
		\EE\big[|\Delta_{i, j}(f)|^2 \big| \calF_{i, j}^-\big]\left(x_{[1, i] \times [1, j] \setminus \{(i, j)}\right)
		=
		\int_{S_{i, j}} \left|
		\Delta_{i, j}(f)\left(x_{[1, i] \times [1, j] \setminus \{(i, j)\}}, u_{\{(i, j)\}}\right)
		\right|^2
		\mu_{i, j}({\rm d} u).
	\]
	Let $(h_{i,j} : 1 \leq i \leq N, 1 \leq j \leq N)$ be a sequence of functions adapted to the
	canonical filtration. By iterating the inequality \eqref{eq:26} (to be more precise, first decoupling with respect to
	rows of the matrix $x_{[1, i] \times [1, j]}$ and then with respect to the columns of
	$(x_{[1, i-1] \times [1, j]}, y_{\{i\} \times [1, j]})$), we get
	\begin{align*}
		&\int_S
		\left(\sum_{i = 1}^N \sum_{j = 1}^N h_{i, j}\left(x_{[1, i] \times [1, j]}\right)
		\right)^{\frac{1}{2}}
		\mu({\rm d} x) \\
		&\simeq
		\int_{S \otimes S}
		\left(
		\sum_{i = 1}^N \sum_{j = 1}^N h_{i, j}
		\left(x_{[1, i-1] \times [1, j]}, y_{\{i\} \times [1, j]} \right)
		\right)^{\frac{1}{2}}
		\mu \otimes \mu ({\rm d} (x, y)) \\
		&\simeq
		\int_{S^{\otimes 4}}
		\left(
		\sum_{i = 1}^N \sum_{j = 1}^N h_{i, j}\left(
		x_{[1, i-1] \times [1, j-1]}, y_{\{i\} \times [1, j-1]}, z_{[1, i-1] \times \{j\}}, t_{(i, j)} 
		\right)
		\right)^{\frac{1}{2}}
		\mu^{\otimes 4}({\rm d}(x, y, z, t)).
	\end{align*}
	Similarly, we obtain
	\begin{align*}
		&
		\int_S \left( \sum_{i = 1}^N \sum_{j = 1}^N h_{i, j}\left(x_{[1, i]\times[1, j]}\right)\right)^{\frac{1}{2}}
		\mu({\rm d} x) \\
		&\qquad\qquad
		\simeq
		\int_{S \otimes S}
		\left(
		\sum_{i = 1}^N \sum_{j = 1}^N h_{i, j}\left(
		x_{[1, i] \times [1, j] \setminus \{(i, j)\}}, t_{(i, j)}
		\right)
		\right)^{\frac{1}{2}}
		\mu\otimes\mu({\rm d} (x, t)) \\
		&\qquad\qquad\leq
		\int_S
		\left(
		\sum_{i = 1}^N \sum_{j = 1}^N
		\int_{S_{i,j}} 
		h_{i, j}\left(
		x_{[1, i] \times [1, j] \setminus \{(i, j)\}}, t
		\right)
		\mu_{i, j}({\rm d} t)
		\right)^{\frac{1}{2}}
		\mu({\rm d} x)
	\end{align*}
	where in the last step we have used Jensen's inequality. Consequently, by taking $h_{i, j} = |\Delta_{i, j}(f)|^2$,
	we obtain \eqref{eq:31}.
\end{proof}

\begin{remark}
	Theorem \ref{thm:1} is generalization of \cite[Proposition 3.50]{weisz1}, where it was assumed that
	$f$ is a strong martingale, that is
	\[
		\EE\big[\Delta_{i, j}(f) \big| \calF^-_{i, j} \big] = 0,
		\qquad\text{for all } i, j \geq 1.
	\]
	However, in view of \cite[Theorem 3.49]{weisz1}, in this case the inequality \eqref{eq:31} can be easily deduced from
	one parameter case.
\end{remark}
Actually, in the one parameter setting, a stronger inequality than \eqref{eq:11} is true, namely the following
characterization due to Davis and Garsia
\[
	\|f\|_{H^1_S}=\inf_{f=g+h} \|g\|_{H^1_s} + \sum_{k=1}^\infty \EE\big[ |\Delta_k h|\big].
\]
We prove the following theorem.
\begin{theorem}
	\label{thm:2}
	Suppose that $(\Omega, \calF, \PP)$ is a discrete probability space with biparameter \eqref{eq:3}-filtration.
	Then for each martingale $f$,
	\[
		\|f\|_{H^1_S} \simeq \inf_{f = g+h} \|g\|_{H^1_\sigma} + 
		\sum_{i = 1}^\infty \sum_{j = 1}^\infty \EE \big[|\Delta_{i,j}(h)|\big].
	\]
\end{theorem}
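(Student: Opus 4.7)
The direction $\|f\|_{H^1_S}\lesssim \inf_{f=g+h}\bigl(\|g\|_{H^1_\sigma}+\|h\|_{\calG_1}\bigr)$ is straightforward. For any decomposition $f=g+h$, the triangle inequality in $\ell^2([1,N]^2)$ applied to $\Delta_{i,j}f=\Delta_{i,j}g+\Delta_{i,j}h$ yields $S(f)\leq S(g)+S(h)\leq S(g)+\sum_{i,j}|\Delta_{i,j}h|$. Integrating, applying Theorem \ref{thm:1} to control $\|S(g)\|_{L^1}$ by $\|\sigma(g)\|_{L^1}$, and taking the infimum over decompositions completes this half of the estimate.

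For the reverse inequality I would first invoke Theorem \ref{main_thm:2} to transfer the problem to the canonical biparameter filtration on a product space $(S,\calS,\mu)=\bigotimes_{i,j}(S_{i,j},\calS_{i,j},\mu_{i,j})$. The payoff is concrete: $d_{i,j}:=\Delta_{i,j}f$ is a function of $x_{[1,i]\times[1,j]}$ that integrates to zero both in the full $i$-th row and $j$-th column of later coordinates, and crucially $\EE(\cdot\mid\calF^-_{i,j})$ reduces to integration against $\mu_{i,j}$ in the single variable $x_{i,j}$. Imitating the one-parameter Davis splitting, fix a total order $\preccurlyeq$ on $[1,N]^2$ refining the coordinate order, set $d^*_{i,j}=\max\{|d_{i',j'}|:(i',j')\prec(i,j)\}$, and define
\[
h_{i,j}=d_{i,j}\ind{|d_{i,j}|>2d^*_{i,j}}-\EE\bigl(d_{i,j}\ind{|d_{i,j}|>2d^*_{i,j}}\bigm|\calF^-_{i,j}\bigr),\qquad g_{i,j}=d_{i,j}-h_{i,j}.
\]
Since $\EE(h_{i,j}\mid\calF^-_{i,j})=0$, both $g_{i,j}$ and $h_{i,j}$ remain martingale differences in each one-parameter direction, so their partial sums define biparameter martingales $g$ and $h$ with $f=g+h$.

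The $\calG_1$-estimate for $h$ is the easier half. On $\{|d_{i,j}|>2d^*_{i,j}\}$ the running maximum of $|d|$ at least doubles, so a telescoping argument gives $\sum_{i,j}|d_{i,j}|\ind{|d_{i,j}|>2d^*_{i,j}}\leq 2\max_{i,j}|d_{i,j}|\leq 8 f^*$; the biparameter Davis maximal inequality $\EE f^*\lesssim \|S(f)\|_{L^1}$ (which in the $(F_4)$ setting follows by iterating its one-parameter counterpart along rows and columns) then yields $\|h\|_{\calG_1}\lesssim \|f\|_{H^1_S}$.

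The hard part, and the main obstacle, is the $H^1_\sigma$-bound for $g$. The strategy follows the one-parameter scheme: derive a pointwise estimate of the form $\sigma(g)^2\leq C\,f^*\cdot M$, where $M=\sum_{i,j}\EE(|g_{i,j}|\mid\calF^-_{i,j})$ (or a similar auxiliary process), and then close by Cauchy--Schwarz in $L^1$, which reduces matters to $\|M\|_{L^1}\lesssim \|S(f)\|_{L^1}$. Here the conditional expectations appearing in $\sigma(g)^2$ are with respect to $\calF^-_{i,j}$ rather than $\calF_{i-1,j-1}$, and it is precisely for this reason that the concrete product model supplied by Theorem \ref{main_thm:2} becomes indispensable: the relevant conditional expectations reduce to one-variable integrations, and the row-by-row and column-by-column decoupling identities from the proof of Theorem \ref{thm:1} can be applied to convert $M$ into a quantity controlled by the maximal function and the square function of $f$. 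A stopping-time argument along the level sets of $d^*_{i,j}$, adapted from the classical Davis--Garsia proof, then closes the estimate.
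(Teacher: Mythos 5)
The easy direction of your proposal is fine and coincides with the paper's. The reverse direction, however, has genuine gaps at exactly the points where the one-parameter Davis--Garsia argument refuses to carry over. First, your decomposition is not adapted: if $d^*_{i,j}$ is the running maximum along a total order refining the coordinate order, its predecessors include indices $(i',j')$ with $i'>i$ or $j'>j$, so $\ind{|d_{i,j}|>2d^*_{i,j}}$, and hence $h_{i,j}$ and $g_{i,j}$, need not be $\calF_{i,j}$-measurable; their partial sums are then not biparameter martingales and $\Delta_{i,j}(h)\neq h_{i,j}$. If instead you take the maximum only over rectangular predecessors $(i',j')\leq(i,j)$, measurability is restored but the doubling/telescoping bound $\sum_{i,j}|d_{i,j}|\ind{|d_{i,j}|>2d^*_{i,j}}\leq 2\max_{i,j}|d_{i,j}|$ collapses, since up to $N$ pairwise incomparable indices can all fire with comparable values. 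Second, the ``biparameter Davis maximal inequality'' $\EE f^*\lesssim\|S(f)\|_{L^1}$ does not ``follow by iterating its one-parameter counterpart along rows and columns'': Doob's maximal operator is not bounded on $L^1$, so $L^1$ inequalities do not iterate the way $L^p$, $p>1$, inequalities do. The paper explicitly defers one side of the biparameter Davis inequality for \eqref{eq:3} filtrations to a forthcoming paper, so you cannot invoke it as a known tool here (and doing so risks circularity, since the present theorem is part of the route to it). Third, and most importantly, the $H^1_\sigma$ bound for $g$ --- which you yourself identify as the main obstacle --- is only asserted (``a stopping-time argument \ldots closes the estimate''), and this is precisely the step that does not adapt: the conditioning is on $\calF^-_{i,j}$, $\EE[\Delta_{i,j}f\mid\calF^-_{i,j}]$ does not vanish for general \eqref{eq:3} martingales, and the running-maximum bookkeeping clashes with the two-parameter order as noted above.

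For contrast, the paper avoids the Davis decomposition altogether. It reduces the hard direction to an inequality \eqref{eq:32} for an \emph{arbitrary} adapted array $(\eta_{i,j})$ (then projects back to martingale differences with the operators $D_{i,j}$, which sidesteps your adaptedness problem), transfers to the canonical product model via Theorem \ref{main_thm:2}, and decouples the $(i,j)$-th coordinate so that, for fixed $x$, the functions $y\mapsto\eta_{i,j}(x,y_{(i,j)})$ become independent; the decomposition then comes from Johnson--Schechtman's $L^{1/2}$ theorem for sums of independent variables. Since the resulting pieces $a_{i,j},b_{i,j}$ are not $\calF^-_{i,j}$-measurable in $x$, the paper proves a separate duality lemma (Lemma \ref{lem:2}), combined with an $L^2$ Doob-type bound for $\sup_{i,j}|\EE[f\mid\calF^-_{i,j}]|$ (Claim \ref{clm:1}), to replace them by the adapted cutoffs $\phi_{i,j}=\eta_{i,j}\ind{A}$, $\psi_{i,j}=\eta_{i,j}\ind{A^c}$. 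Your proposal contains no substitute for this decoupling/Johnson--Schechtman/duality machinery, so as it stands the reverse inequality is not proved.
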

\begin{proof}
	Since
	\[
		\EE\Big(\sum_{i = 1}^\infty \sum_{j = 1}^\infty |\Delta_{i,j}(h)|^2 \Big)^\frac{1}{2} 
		\leq
		\EE\Big(\sum_{i = 1}^\infty \sum_{j = 1}^\infty |\Delta_{i,j}(h)|\Big),
	\]
	by Theorem \ref{thm:1}, we get
	\[
		\|g+h\|_{H^1_S} \leq \|g\|_{H^1_S} + \|h\|_{H^1_S}
		\leq C \|g\|_{H^1_\sigma} + \sum_{i = 1}^\infty \sum_{j = 1}^\infty \EE \big[|\Delta_{i,j}(h)|\big].
	\]
	To prove the opposite inequality, it is enough to show that there is $C > 0$ such that for any adapted sequence
	$(\eta_{i, j} : 1 \leq i, j\leq N)$,
	there are adapted sequences $(\phi_{i,j} : 1 \leq i, j \leq N)$ and $(\psi_{i,j} : 1 \leq i, j \leq N)$, so that
	$\eta_{i, j} = \phi_{i, j} + \psi_{i, j}$, and
	\begin{equation}
		\label{eq:32}
		C^{-1} \EE\left(\sum_{i = 1}^N \sum_{j =1}^N | \eta_{i, j} | ^2 \right)^{\frac{1}{2}}
		\geq
		\EE\left(
		\sum_{i = 1}^N \sum_{j = 1}^N \EE\left(|\phi_{i ,j}|^2 \big| \calF_{i, j}^-\right)
		\right)^{\frac{1}{2}}
		+
		\sum_{i = 1}^N \sum_{j =1}^N \EE \big[|\psi_{i, j}|\big].
	\end{equation}
	Indeed, since \eqref{eq:32} implies that
	\[
		C^{-1} \EE\left(\sum_{i = 1}^N \sum_{j =1}^N | \eta_{i, j} | ^2 \right)^{\frac{1}{2}}
		\geq
		\EE\left(
		\sum_{i = 1}^N \sum_{j = 1}^N \EE\left(|D_{i, j}( \phi_{i ,j}) |^2 \big| \calF_{i, j}^-\right)
		\right)^{\frac{1}{2}}
		+
		\sum_{i = 1}^N \sum_{j =1}^N \EE \big[ |D_{i, j} \psi_{i, j}|\big].
	\]
	where
	\[
		D_{i, j}(f) = \EE[f | \calF_{i, j}] - \EE[f | \calF_{i-1, j}] - \EE[f | \calF_{i, j-1}] 
		+ \EE[f | \calF_{i-1, j-1}],
	\]
	we can take $\eta_{i, j} = \Delta_{i, j}(f)$, and
	\[
		g_{i, j} = \sum_{m = 1}^i \sum_{n = 1}^j D_{m, n} (\phi_{m, n}), \qquad
		h_{i, j} = \sum_{m = 1}^i \sum_{n = 1}^j D_{m, n} (\psi_{m, n}).
	\]
	In view of Theorem \ref{main_thm:2}, to show \eqref{eq:32}, we may assume that we are dealing with a product
	space $(S, \calS, \mu)$ equipped with the canonical biparameter filtration. By \eqref{eq:26}, we have
	\[
		\EE\left(\sum_{i = 1}^N \sum_{j =1}^N | \eta_{i, j} | ^2 \right)^{\frac{1}{2}}
		\simeq
		\int_{S \otimes S}
		\left(
		\sum_{i = 1}^N \sum_{j = 1}^N 
		\left| \eta_{i, j}\left(x_{[1, i] \times [1, j] \setminus \{(i ,j)\}}, y_{(i, j)}\right) \right|^2
		\right)^{\frac{1}{2}}
		\mu \otimes \mu ({\rm d} (x, y)).
	\]
	Now, given $x \in S$, the functions 
	\[
		S \ni y \mapsto \eta_{i, j}\left(x_{[1, i] \times [1, j] \setminus \{(i, j)\}}, y_{(i, j)}\right)
	\]
	are independent, thus by \cite[Theorem 1]{JSch}, for $L^\frac{1}{2}$, there are functions
	$(a_{i, j} : 1 \leq i, j \leq N)$ and $(b_{i, j} : 1 \leq i, j \leq N)$, such that
	\begin{align*}
		&
		\int_{S \otimes S}
		\left(
		\sum_{i = 1}^N \sum_{j = 1}^N
		\left| \eta_{i, j}\left(x_{[1, i] \times [1, j] \setminus \{(i ,j)\}}, y_{(i, j)}\right) \right|^2
		\right)^{\frac{1}{2}}
		\mu \otimes \mu ({\rm d} (x, y)) \\
		&\geq
		C
		\int_S
		\left\{
		\left(
		\sum_{i = 1}^N \sum_{j =1}^N
		\int_{S_{i, j}}
		|a_{i, j}(x, y)|^2
		\mu_{i, j}({\rm d} y)
		\right)^{\frac{1}{2}}
		+
		\sum_{i = 1}^N \sum_{j = 1}^N 
		\int_{S_{i, j}}
		|b_{i, j}(x, y)|
		\mu_{i, j}({\rm d} y)
		\right\}
		\mu({\rm d} x),
	\end{align*}
	and $\eta_{i, j} = a_{i, j} + b_{i, j}$. In fact, $a_{i, j}$ and $b_{i, j}$ have disjoint supports.
	This would be the desired decomposition but $a_{i, j}(\:\cdot\:, y)$ and $b_{i, j}(\:\cdot\:, y)$ are not necessarily
	$\calF_{i, j}^-$-measurable. To fix this, we use the following lemma.
	\begin{lemma}
		\label{lem:2}
		Let $(\Omega, \calF, \PP)$ be a finite probability space, $(\calF_\alpha : \alpha \in A)$ be a filtration on
		$\Omega$ indexed by a finite set $A$. Assume that there is $\delta > 0$, so that 
		\begin{equation}
			\label{eq:35}
			\delta \Big\| \sup_{\alpha \in A} \big|\EE\big[f \big| \calF_\alpha\big]\big|\Big\|_{L^2} \leq \|f\|_{L^2}
		\end{equation}
		for all $f \in L^2(\Omega)$. Let $B$ be a finite set. Then for all $\kappa > 0$, and any sequences
		$(w_{\alpha, \beta} : (\alpha, \beta) \in A \times B)$ and $(f_{\alpha, \beta} : (\alpha, \beta) \in A \times B)$,
		such that $f_{\alpha, \beta}$ is $\calF_\alpha$-measurable and $w_{\alpha, \beta} : \Omega \rightarrow [0, 1]$,
		\begin{equation}
			\label{eq:14}
			\EE \left(\sum_{(\alpha, \beta) \in A \times B} |w_{\alpha, \beta} f_{\alpha, \beta}|^2\right)^\frac{1}{2}
			\geq
			\kappa^2 \delta \cdot
			\EE\left(\sum_{(\alpha, \beta) \in A \times B} 
			\ind{A_{\alpha, \beta}^\kappa} |f_{\alpha, \beta}|^2
			\right)^\frac{1}{2}
		\end{equation}
		where
		\[
			A_{\alpha, \beta}^\kappa = 
			\left\{
			\EE[w_{\alpha, \beta} | \calF_\alpha] \geq \kappa
			\right\}.
		\]
	\end{lemma}

	Before we apply Lemma \ref{lem:2}, let us check the following claim.
	\begin{claim}
		\label{clm:1}
		There is $\delta > 0$ such that for all martingales $f$,
		\[
			\delta \Big\|\sup_{(i, j) \in \NN^2} \big|\EE\big[f\big|\calF^-_{i, j} \big]\big|\Big\|_{L^2} 
			\leq \|f\|_{L^2}.
		\]
	\end{claim}
	Since
	\[
		| \EE[f | \calF^-_{i, j}] |
		\leq
		| \EE[f | \calF_{i, j} ] | + 
		\Big(\sum_{i = 1}^\infty \sum_{j = 1}^\infty \big|\EE[f | \calF_{i ,j}] -
		\EE[f | \calF^-_{i, j}] \big|^2\Big)^{\frac{1}{2}},
	\]
	we have
	\begin{align*}
		\Big\|\sup_{(i, j) \in \NN^2} \big|\EE\big[f\big|\calF^-_{i, j} \big]\big|\Big\|_{L^2}
		&\leq
		\Big\|\sup_{(i, j) \in \NN^2} \big|\EE\big[f\big|\calF_{i, j} \big]\big|\Big\|_{L^2} \\
		&\phantom{\leq}+
		\Big\|\Big(\sum_{i = 1}^\infty \sum_{j = 1}^\infty 
		\big|\EE\big[f \big| \calF_{i, j} \big] 
		- \EE\big[f\big|\calF^-_{i, j} \big]\big|^2\Big)^{\frac{1}{2}} \Big\|_{L^2}.
	\end{align*}
	By iterating one-parameter Doob's inequality, we can estimate the first term by a constant multiple of $\|f\|_{L^2}$.
	To bound the second term, we observe that
	\[
		\EE\big[f \big| \calF_{i, j} \big] - \EE\big[f\big|\calF^-_{i, j} \big]
		=
		\Delta_{i, j}(f) - \EE\big[\Delta_{i, j}(f) \big|\calF^-_{i, j} \big],
	\]
	thus
	\[
		\big\|
		\EE\big[f \big| \calF_{i, j} \big] - \EE\big[f\big|\calF^-_{i, j} \big]
		\big\|_{L^2}
		\leq
		\big\|
		\Delta_{i, j}(f)
		\big\|_{L^2},
	\]
	and consequently
	\begin{align*}
		\Big\|\Big(\sum_{i = 1}^\infty \sum_{j = 1}^\infty
		\big|\EE\big[f \big| \calF_{i, j} \big] 
		- \EE\big[f\big|\calF^-_{i, j} \big]\big|^2\Big)^{\frac{1}{2}} \Big\|_{L^2}
		&=
		\Big(\sum_{i = 1}^\infty \sum_{j = 1}^\infty
		\big\| \EE\big[f \big| \calF_{i, j} \big] - \EE\big[f\big|\calF^-_{i, j} \big]
	    \big\|_{L^2}^2
		\Big)^{\frac{1}{2}} \\
		&\leq
		\Big(\sum_{i = 1}^\infty \sum_{j = 1}^\infty
		\big\|\Delta_{i, j}(f)\big\|_{L^2}^2 \Big)^\frac{1}{2} = \|f\|_{L^2},
	\end{align*}
	proving the claim.

	Now, let us see how to apply Lemma \ref{lem:2} to conclude the proof of the theorem. Since $a_{i, j}(x, y) 
	= b_{i, j}(x, y)=0$ if
	$\eta_{i, j}\left(x_{[1, i] \times [1, j] \setminus \{(i, j)\}}, y\right) = 0$, we set
	\[
		w_{(i, j), y}(x) = 
		\frac{a_{i, j}(x, y)}{\eta_{i, j}\left(x_{[1, i]\times[1, j]\setminus\{(i, j)\}}, y\right)}.
	\]
	The spaces $S_{i,j}$ are discrete, thus by Lemma \ref{lem:2}, we obtain
	\begin{align*}
		&
		\int_S
		\left(\sum_{i = 1}^N \sum_{j = 1}^N \int_{S_{i, j}} |a_{i, j}(x, y)|^2 
		\mu_{i, j}({\rm d} y) \right)^{\frac{1}{2}}
		\mu({\rm d} x) \\
		&\qquad\qquad=
		\int_S
		\left(
		\sum_{i = 1}^N \sum_{j = 1}^N \int_{S_{i,j}}
		\left|
		w_{(i, j), y}(x) \eta_{i, j}\left(x_{[1, i] \times [1, j] \setminus \{(i, j)\}}, y\right)
		\right|^2
		\mu_{i,j}({\rm d} y)
		\right)^{\frac{1}{2}}
		\mu({\rm d} x) \\
		&\qquad\qquad\gtrsim
		\int_S
		\left(
		\sum_{i = 1}^N \sum_{j =1}^N \int_{S_{i, j}}
		\left|
		\eta_{i, j}\left(x_{[1, i] \times [1, j] \setminus \{(i, j)\}}, y \right)
		\ind{A_{(i, j), y}}(x)
		\right|^2
		\mu_{i, j}({\rm d} y)
		\right)^{\frac{1}{2}}
		\mu({\rm d} x)
	\end{align*}
	where
	\[
		A_{(i, j), y} = \left\{x \in S : 
		\EE\big[w_{(i, j), y} \big| \calF^-_{i, j}\big](x)
		\geq \frac{1}{2} \right\}.
	\]
	Since
	\begin{align*}
		&
		\int_S \big(1 - w_{(i, j), y}(x)\big) 
		\left|\eta_{i, j}\left(x_{[1, i] \times [1, j] \setminus \{(i, j)\}}, y\right) \right|
		\mu({\rm d} x) \\
		&\qquad\qquad=
		\int_S \EE\big[1 - w_{(i, j), y} \big| \calF_{i, j}^-\big] 
		\left|\eta_{i, j}\left(x_{[1, i] \times [1, j] \setminus \{(i, j)\}}, y\right) \right|
		\mu({\rm d} x) \\
		&\qquad\qquad\geq
		\frac{1}{2}
		\int_S 
		\ind{A^c_{(i, j), y}}(x) 
		\left|\eta_{i, j}\left(x_{[1, i] \times [1, j] \setminus \{(i, j)\}}, y\right) \right|
		\mu({\rm d} x),
	\end{align*}
	we obtain
	\begin{align*}
		&
		\int_S \sum_{i = 1}^N \sum_{j = 1}^N \int_{S_{i, j}} |b_{i, j}(x, y)|^2 \mu_{i, j}({\rm d} y)
		\mu({\rm d} x)\\
		&\qquad\qquad=
		\sum_{i = 1}^N \sum_{j = 1}^N \int_{S_{i, j}}
		\int_S
		\big(1 - w_{(i, j), y}(x)\big)
		\left|\eta_{i, j}\left(x_{[1, i] \times [1, j] \setminus \{(i, j)\}}, y\right) \right|
		\mu({\rm d} x)
		\mu_{i, j}({\rm d} y) \\
		&\qquad\qquad\gtrsim
		\sum_{i = 1}^N \sum_{j = 1}^N \int_{S_{i, j}} \int_S
		\left|\eta_{i, j}\left(x_{[1, i] \times [1, j] \setminus \{(i, j)\}}, y\right) \right|
		\ind{A^c_{(i, j), y}}(x)
		\mu({\rm d} x)
		\mu_{i, j}({\rm d} y) \\
		&\qquad\qquad=
		\sum_{i = 1}^N \sum_{j = 1}^N
		\int_S
		\left|\eta_{i, j}\left(x_{[1, i] \times [1, j]}\right)\right|
		\ind{A^c{(i, j), x_{(i, j)}}}
		\left(x_{[1, i] \times [1, j] \setminus \{(i, j)\}}\right)
		\mu({\rm d} x).
	\end{align*}
	Therefore, setting
	\[
		\phi_{i, j}\left(x_{[1, i] \times [1, j]}\right) 
		=
		\eta_{i, j}\left(x_{[1, i] \times [1, j]}\right) \ind{A_{(i, j), x_{(i, j)}}}
		\left(x_{[1, i] \times [1, j]\setminus\{(i, j)\}}\right)
	\]
	and
	\[
		 \psi_{i, j}\left(x_{[1, i] \times [1, j]}\right)
		 =
		 \eta_{i, j}\left(x_{[1, i] \times [1, j]}\right) \ind{A^c_{(i, j), x_{(i, j)}}}
		\left(x_{[1, i] \times [1, j]\setminus\{(i, j)\}}\right),
	\]
	we conclude the proof of \eqref{eq:32}, and hence the theorem follows.

	It remains now to prove Lemma \ref{lem:2}.
	\begin{proof}[Proof of Lemma \ref{lem:2}]
	Let us recall that $\Omega$ as well as $A$ and $B$ are finite sets. For a double-indexed sequence 
	$(f_{\alpha, \beta} : (\alpha, \beta) \in A \times B)$, we set
	\[
		f_\alpha = (f_{\alpha, \beta} : \beta \in B) 
		\qquad\text{and}\qquad
		f = (f_\alpha : \alpha \in A).
	\]
	By $W_\alpha$ and $W$ we denote multiplication operators acting on $\ell^2(B)$ and $\ell^2(A, \ell^2(B))$ as
	\[
		W_\alpha\big(f_\beta : \beta \in B \big) = \big(w_{\alpha, \beta} f_\beta : \beta \in B\big),
	\]
	and
	\[
		W \big(f_\alpha : \alpha \in A\big) = \big(W_\alpha f_\alpha : \alpha \in A\big),
	\]
	respectively. Let
	\[
		\calA = \left\{
		\phi: \Omega \rightarrow \ell^2\big(A \times B\big) : 
		\begin{aligned}
			\phi_{\alpha, \beta} \text{ is } \calF_{\alpha}\text{-measurable, and } \\
			\supp \phi_{\alpha, \beta} \subseteq A_{\alpha, \beta}^\kappa 
			\text{ for all } (\alpha, \beta) \in A \times B
		\end{aligned}
		\right\}.
	\]
	For $p \in [1, \infty)$ we set
	\[
		Y_p = \left\{\phi \in L^p\big(\ell^2(A \times B) \big) : \phi \in \calA \right\}.
	\]
	We denote by $Y_p^*$ the Banach space dual to $Y_p$ with respect to the pairing
	\begin{align*}
		\sprod{\phi}{\psi} 
		&= 
		\sum_{\alpha \in A} \EE \sprod{\phi_\alpha}{\psi_\alpha}_{\ell^2(B)} \\
		&=
		\sum_{(\alpha, \beta) \in A \times B} 
		\EE \big(\phi_{\alpha, \beta} \psi_{\alpha, \beta}\big).
	\end{align*}
	Without loss of generality, we can assume that $f \in \calA$. Indeed, if we replace $f_{\alpha,\beta}$
	by $f_{\alpha, \beta} \ind{A_{\alpha, \beta}^\kappa}$ then the left hand-side of \eqref{eq:14} is decreased. 
	Since 
	\begin{align}
		\nonumber
		\EE\Big(\sum_{\alpha \in A} \|W_\alpha f_\alpha \|_{\ell^2(B)}^2 \Big)^\frac{1}{2}
		&\geq
		\Big(
		\sum_{(\alpha, \beta) \in A \times B}
		\EE\big[|w_{\alpha, \beta}|\cdot |f_{\alpha, \beta}| \big]^2
		\Big)^{\frac{1}{2}} \\
		\label{eq:36}
		&\geq
		\kappa
		\Big(\sum_{(\alpha, \beta) \in A \times B} \EE[|f_{\alpha, \beta}|]^2\Big)^\frac{1}{2},
	\end{align}
	if the left-hand side of \eqref{eq:14} equals zero, then $f \equiv 0$. Therefore, we can assume that
	$\|W f\|_{\ell^2(A \times B)}$ and $f$ are not identically zero.

	Observe that $Y_1$ is finite dimensional, thus to prove the lemma it is enough to show that for any $g \in Y_1^*$
	with $\|g\|_{Y_1^*} = 1$,
	\[
		\EE\Big(\sum_{\alpha \in A} \|W_\alpha f_\alpha \|_{\ell^2(B)}^2 \Big)^\frac{1}{2}
		\geq \kappa^2 \delta \sprod{f}{g}.
	\]
	Let us fix $g$ on the unit sphere in $Y_1^*$. We can assume that $\sprod{f}{g} = 1$. We claim that the function
	\[
		\begin{alignedat}{1}
			\Phi: V &\longrightarrow [0, \infty) \\
			f &\longmapsto \EE\Big(\sum_{\alpha \in A} \|W_\alpha f_\alpha \|_{\ell^2(B)}^2 \Big)^\frac{1}{2}
		\end{alignedat}
	\]
	where $V = \{f \in Y_1 : \sprod{f}{g} = 1\}$, attains its minimum. First, let us notice that for $\lambda \in [0, 1]$
	and $\phi, \psi \in V$, we have $\lambda \phi + (1- \lambda) \psi \in V$, and
	\begin{align*}
		\Phi(\lambda \phi + (1-\lambda)\psi)
		&\leq
		\Phi(\lambda \phi) + \Phi((1-\lambda)\psi) \\
		&=
		\lambda\Phi(\phi) + (1-\lambda) \Phi(\psi),
	\end{align*}
	that is $\Phi$ is convex. Moreover, since norms in a finite dimensional space are comparable, by \eqref{eq:36}
	we get
	\[
		\Phi(f) \gtrsim \|f\|_{Y_1}.
	\]
	Consequently, the function $\Phi$ attains its minimum on $V$. Let $f$ be the minimizer. By
	the Lagrange multiplier method, there is $\lambda \in \RR$ so that 
	\begin{align*}
		\lambda = \lambda \sprod{g}{f} &= 
		\left. \frac{{\rm d} }{{\rm d} t} \Phi(f + t f) \right|_{t = 0} \\
		&=
		\left. \frac{{\rm d} }{{\rm d} t} (1+t) \Phi(f) \right|_{t = 0} = \Phi(f).
	\end{align*}
	Thus, it remains to show that $\lambda \geq \kappa^2 \delta$. To do so, we consider a Banach subspace
	$Z_1 = \{u \in Y_1 : u \in \calB\}$ where
	\[
		\calB = \left\{u \in \calA : \supp u_{\alpha, \beta} \subseteq \supp f_{\alpha, \beta}
		\text{ for all } (\alpha, \beta) \in A \times B \right\}.
	\]
	Let us notice that for $u \in \calA$, we have $(\pi_\alpha u_\alpha : \alpha \in A) \in \calB$ where
	\[
		\pi_\alpha u_\alpha = \left(u_{\alpha, \beta} \ind{\supp f_{\alpha, \beta}} : \beta \in B \right).
	\]
	Since the support of $\|W(f+t u)\|_{\ell^2(A \times B)}$, for $t \in \RR$ and $u \in Z_1$, is contained in
	\[
		U = \big\{\|W f\|_{\ell^2(A \times B)} > 0 \big\},
	\]
	we obtain
	\begin{align*}
		\nonumber
		\lambda \sprod{g}{u} = 
		\left.
		\frac{{\rm d}}{{\rm d} t}
		\Phi(f + t u)
		\right|_{t = 0}
		&=
		\sum_{(\alpha, \beta) \in A \times B}
		\EE\left[\Big(\sum_{\alpha' \in A} \|W_{\alpha'} f_{\alpha'}\|_{\ell^2(B)}^2\Big)^{-\frac{1}{2}}
		\ind{U}
		w_{\alpha, \beta}^2 f_{\alpha,\beta} u_{\alpha, \beta}
		\right] \\
		&=
		\sum_{\alpha \in A}
		\EE
		\left\langle\EE\big[\|W f\|_{\ell^2(A \times B)}^{-1}\ind{U} 
		W_\alpha^2 f_{\alpha} \big| \calF_\alpha \big], u_\alpha
		\right\rangle_{\ell^2(B)}.
	\end{align*}
	Hence,
	\begin{align*}
		\lambda 
		= \|\lambda g \|_{Y_1^*}
		\geq
		\|\lambda g\|_{Z_1^*}
		=
		\left\|
		\left( 
		\EE\big[\|W f\|_{\ell^2(A \times B)}^{-1} \ind{U}
		W_{\alpha}^2 f_{\alpha} \big| \calF_\alpha \big] : \alpha \in A\right)
		\right\|_{Z_1^*}.
	\end{align*}
	Next, we write
	\begin{align*}
		&\left\|
		\left(
		\EE\big[\|W f\|_{\ell^2(A \times B)}^{-1} \ind{U}
		W_{\alpha}^2 f_{\alpha} \big| \calF_\alpha \big] : \alpha \in A\right)
		\right\|_{Z_1^*} \\
		&\qquad\qquad=
		\sup_{u \in \calB}
		\frac{\left| 
		\sum_{\alpha \in A} \EE
		\Big\langle
		\EE\big[ \|W f\|_{\ell^2(A \times B)}^{-1} \ind{U} W_\alpha^2 f_{\alpha} \big| \calF_\alpha \big], 
		u_\alpha \Big\rangle_{\ell^2(B)}
		\right|}{\|u\|_{Y_1}} \\
		&\qquad\qquad
		=
		\sup_{u \in \calA}
		\frac{\left| 
		\sum_{\alpha \in A} \EE
		\Big\langle
		\EE\big[ \|W f\|_{\ell^2(A \times B)}^{-1} \ind{U} W_\alpha^2 f_{\alpha} \big| \calF_\alpha \big], 
		u_\alpha \Big\rangle_{\ell^2(B)}
		\right|}{\EE\big(\|\pi_\alpha u_\alpha\|_{\ell^2(B)}^2 \big)^{\frac{1}{2}}}.
	\end{align*}
	Since $\pi_\alpha u_\alpha(\omega) \neq 0$ implies that $f_{\alpha, \beta}(\omega) \neq 0$ for some $\beta \in B$,
	thus $\EE[w_{\alpha, \beta} | \calF_\alpha](\omega) > \kappa$, which leads to 
	$\EE[\|Wf \|_{\ell^2(A \times B)} | \calF_\alpha](\omega) > 0$. Therefore,
	\begin{align*}
		&\left\|
		\left(
		\EE\big[\|W f\|_{\ell^2(A \times B)}^{-1} \ind{U}
		W_{\alpha}^2 f_{\alpha} \big| \calF_\alpha \big] : \alpha \in A\right)
		\right\|_{Z_1^*}\\
		&\qquad=
		\sup_{u \in \calA}
		\frac{\left|
		\sum_{\alpha \in A} \EE
		\Big\langle
		\EE\big[ \|W f\|_{\ell^2(A \times B)}^{-1} \ind{U} W_\alpha^2 f_{\alpha} \big| \calF_\alpha \big],
		u_\alpha \EE\big[\|Wf\|_{\ell^2(A \times B)} \big|\calF_\alpha\big]\Big\rangle_{\ell^2(B)}
       	\right|}{\EE \Big(\sum_{\alpha \in A} 
		\| \pi_\alpha u_\alpha \|_{\ell^2(B)}^2 
		\EE\big[\|W f\|_{\ell^2(A \times B)} \big|\calF_\alpha \big]^2 \Big)^\frac{1}{2}
		}.
	\end{align*}
	By Cauchy--Schwarz inequality and \eqref{eq:35}, we get
	\begin{align*}
		&\EE \Big(\sum_{\alpha \in A}
		\| \pi_\alpha u_\alpha \|_{\ell^2(B)}^2 \EE\big[\|W f\|_{\ell^2(A \times B)} 
		\big|\calF_\alpha \big]^2 \Big)^\frac{1}{2} \\
		&\qquad\qquad\leq
		\EE\Big(\|u\|_{\ell^2(A \times B)} \cdot 
		\sup_{\alpha \in A} \EE\big[\|W f\|_{\ell^2(A \times B)} \big| \calF_\alpha \big]\Big) \\
		&\qquad\qquad\leq
		\delta^{-1} \|u\|_{Y_2} \|Wf\|_{L^2(\ell^2(A\times B))}.
	\end{align*}
	Therefore,
	\begin{align*}
		&
		\left\|
		\left(
		\EE\big[\|W f\|_{\ell^2(A \times B)}^{-1} \ind{U}
		W_{\alpha}^2 f_{\alpha} \big| \calF_\alpha \big] : \alpha \in A\right)
		\right\|_{Y_1^*} \\
		&\geq
		\delta
		\sup_{u \in \calA}
		\frac{\left|
		\sum_{\alpha \in A} \EE
		\left\langle
		\EE\big[ \|W f\|_{\ell^2(A \times B)}^{-1} \ind{U} W_\alpha^2 f_{\alpha} \big| \calF_\alpha \big] \cdot
		\EE\big[ \|Wf\|_{\ell^2(A \times B)} \big|\calF_\alpha\big],
		u_\alpha 
		\right\rangle_{\ell^2(B)}
       	\right|}{\|u\|_{Y_2} \|Wf\|_{L^2(\ell^2(A \times B))}} \\
		&=
		\frac{\delta}{\|Wf\|_{L^2(\ell^2(A\times B))}}
		\left\| 
		\left(
		\EE\big[ \|W f\|_{\ell^2(A \times B)}^{-1} \ind{U} W_\alpha^2 f_{\alpha} \big| \calF_\alpha \big] 
		\cdot
		\EE\big[ \|Wf\|_{\ell^2(A \times B)} \big|\calF_\alpha\big]
		:
		\alpha \in A
		\right)
		\right\|_{Y_2}.
	\end{align*}
	Since $f \in \calA$, by Cauchy--Schwarz inequality, we have
	\begin{align*}
		\kappa^2 |f_{\alpha, \beta}| 
		&\leq
		\EE
		|f_{\alpha, \beta}| \cdot \EE\big[ w_{\alpha, \beta} \big| \calF_\alpha \big]^2 \\
		&=
		|f_{\alpha, \beta}| \cdot \EE\big[ w_{\alpha, \beta} \ind{U} \big| \calF_\alpha \big]^2 \\
		&\leq
		|f_{\alpha, \beta}| \cdot 
		\EE\big[\|W f \|_{\ell^2(A \times B)} \big| \calF_\alpha \big]
		\cdot
		\EE\Big[ \|W f\|_{\ell^2(A \times B)}^{-1} \ind{U} w_{\alpha, \beta}^2 \big| \calF_\alpha \big],
	\end{align*}
	thus
	\[
		\left\|
		\left(
		\EE\big[ \|W f\|_{\ell^2(A \times B)}^{-1} \ind{U} W_\alpha^2 f_{\alpha} \big| \calF_\alpha \big]
		\cdot
		\EE\big[ \|Wf\|_{\ell^2(A \times B)} \big|\calF_\alpha\big]
		:
		\alpha \in A
		\right)
		\right\|_{Y_2}
		\geq
		\kappa^2
		\|f\|_{Y_2}.
	\]
	Hence,
	\[
		\left\|
		\left(
		\EE\big[\|W f\|_{\ell^2(A \times B)}^{-1} \ind{U}
		W_{\alpha}^2 f_{\alpha} \big| \calF_\alpha \big] : \alpha \in A\right)
		\right\|_{Y_1^*}
		\geq \delta 
		\|Wf\|_{L^2(\ell^2(A\times B))}^{-1} \kappa^2 \|f\|_{Y_2} \geq \kappa^2 \delta,
	\]
	which completes the proof of the lemma.
	\end{proof}
	Having justified Lemma \ref{lem:2}, we complete the proof of the theorem.
\end{proof}

\section*{Acknowledgments}
The research was partially supported by the National Science Centre, Poland, Grant number 2016/23/B/ST1/01665.

\begin{bibliography}{f4}
	\bibliographystyle{amsplain}
\end{bibliography}

\end{document}